\theoremstyle{plain} 
\newtheorem{lemma}[equation]{Lemma} 
\newtheorem{proposition}[equation]{Proposition} 
\newtheorem{theorem}[equation]{Theorem} 
\newtheorem{conjecture}[equation]{Conjecture}
\theoremstyle{definition}
\theoremstyle{remark}
\newtheorem{remark}[equation]{Remark}
\numberwithin{equation}{section}
\def\norm#1.#2.{\lVert#1\rVert_{#2}}
\def\Norm#1.#2.{\bigl\lVert#1\bigr\rVert_{#2}}
\def\NOrm#1.#2.{\Bigl\lVert#1\Bigr\rVert_{#2}}
\def\NORm#1.#2.{\biggl\lVert#1\biggr\rVert_{#2}}
\def\NORM#1.#2.{\Biggl\lVert#1\Biggr\rVert_{#2}}
\def\ip#1,#2,{\langle #1,#2\rangle}
\def\Ip#1,#2,{\langle#1,#2\rangle}
\def\IP#1,#2,{\langle#1,#2\rangle}
\def\mid{\,:\,}
\def\XXint#1#2#3{{\setbox0=\hbox{$#1{#2#3}{\int}$}
     \vcenter{\hbox{$#2#3$}}\kern-.5\wd0}}
\newcommand{\rt}{\rightarrow}
\newcommand{\nint}{ \displaystyle\int}
\newcommand{\norme}[1]{ \left\| #1 \right\|}
\newcommand{\ral}{\mathbb{R}}
\newcommand{\nat}{\mathbb{N}}
\begin{document}

\title[Maximal functions and Singular Integrals on Weighted spaces]{On joint estimates for maximal functions and singular integrals in weighted spaces}
\subjclass[2000]{Primary: 42B20 Secondary: 42B25, 42B35}
\keywords{Weights, Calder\'on-Zygmund operators}

\author[M.C. Reguera]{Maria Carmen Reguera}
\thanks{Research supported in part by grant NSF-DMS 0968499}
\address{School of Mathematics, Georgia Institute of Technology, Atlanta GA 30332, USA, and Centre for Mathematical Sciences, University of Lund, Lund, Sweden}
\email {mreguera@math.gatech.edu \\ mreguera@maths.lth.se}

\author[J. Scurry]{James Scurry}
\thanks{Research supported in part by the National Science Foundation under Grant No. 1001098}
\address{ School of Mathematics, Georgia Institute of Technology, Atlanta GA 30332, USA}
\email {jscurry3@math.gatech.edu}


\begin{abstract}
We consider a conjecture attributed to Muckenhoupt and Wheeden which suggests a positive relationship between the continuity of the Hardy-Littlewood maximal operator and the Hilbert transform in the weighted setting. Although continuity of the two operators is equivalent for $A_p$ weights with $1 < p < \infty$, through examples we illustrate this is not the case in more general contexts. In particular, we study weights for which the maximal operator is bounded on the corresponding $L^p$ spaces while the Hilbert transform is not. We focus on weights which take the value zero on sets of non-zero measure and exploit this lack of strict positivity in our constructions. These type of weights and techniques have been explored previously in Reguera \cite{1008.3943} and Reguera-Thiele \cite{1011.1767}.
\end{abstract}

\maketitle

\section{Introduction}
Characterizing boundedness of singular integral operators in the two weighted setting has proven to be a very delicate problem even for the particular instance of the Hilbert transform. Of the different aspects of this active subject, we are interested in the \emph{joint} boundedness of the Hardy-Littlewood maximal operator and the Hilbert transform for a certain pair of weights. Nazarov, Treil and Volberg characterize the two weight boundedness of the family formed by two Maximal operators and the Hilbert transform, \cite{1003.1596}. Our goal is to show that once we consider the two families independently, there is no correlation between boundedness of one or the other. For other aspects of the subject we refer the reader to the work of Lacey, Sawyer and Uriarte-Tuero \cites{0911.3437, 0807.0246, 0911.3920, 1001.4043, 1108.2319} and Cruz-Uribe, Martell and P\'erez \cite{MR2351373}.

For the Muckenhoupt one weight setting, the continuity of the Hilbert transform $H$ and Hardy-Littlewood maximal function $M$ are equivalent. Namely, for $w$ a weight in the Muckenhoupt $A_p$ class, with $1 < p < \infty$, we have $H: L^p(w) \rt L^p(w)$ iff  $M: L^p(w) \rt L^p(w)$. It is important to mention that weights in the $A_p$ class satisfy $w>0$ a.e. and this property is crucial, as we will see later. In \cite{MR2351373}, the authors state a conjecture attributed to Muckenhoupt and Wheeden that suggests a subtle but positive relationship between $M$ and $H$ in the two weight setting. We state the conjecture explicitly,

\begin{conjecture}[$L^{p}$ Muckenhoupt-Wheeden]\label{c.lpMW}
Let $M$ be the Hardy-Littlewood maximal operator, $T$ be a Calder\'on-Zygmund operator and let $w$ and $v$ be weights on $\mathbb R^{d}$. Then 
\begin{eqnarray}\label{e.2wM}
M:& L^{p}(v)\mapsto L^{p}(w)\\ \label{e.2wMdual}
M:& L^{p'}(w^{1-p'}) \mapsto L^{p'}(v^{1-p'})
\end{eqnarray}
if and only if
\begin{equation} \label{e.2wT}
T:  L^{p}(v)\mapsto L^{p}(w).
\end{equation}
\end{conjecture}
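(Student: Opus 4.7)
My plan is to separate the equivalence into its two directions, since they differ greatly in difficulty.

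For the forward implication $\eqref{e.2wT}\Rightarrow\eqref{e.2wM},\eqref{e.2wMdual}$, the plan is to exploit a pointwise nondegeneracy property of standard Calder\'on-Zygmund kernels: for every cube $Q$ there exists a sibling cube $Q^{\ast}$ of comparable size and at comparable distance on which $|T(\chi_Q f)(x)|\gtrsim |Q|^{-1}\int_Q f$ uniformly in $x$, whenever $f\ge 0$. Testing the inequality $\|Tf\|_{L^p(w)}\le C\|f\|_{L^p(v)}$ on $f=\sigma\chi_Q$ with $\sigma=v^{1-p'}$ and integrating the pointwise bound over $Q^{\ast}$ would produce
\[
\left(\tfrac{\sigma(Q)}{|Q|}\right)^p w(Q^{\ast}) \;\lesssim\; \sigma(Q).
\]
Swapping the roles of $Q$ and $Q^{\ast}$ and taking suprema over dyadic ancestors of a point then yields the Sawyer testing condition for $M$, and Sawyer's two-weight theorem delivers \eqref{e.2wM}. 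Applying the same argument to the formal adjoint $T^{\ast}$ gives \eqref{e.2wMdual}.

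The reverse implication is the substance of the conjecture. My plan there is first to use Sawyer's theorem to upgrade \eqref{e.2wM} and \eqref{e.2wMdual} to the symmetric pair of testing inequalities
\[
\int_Q M(\sigma\chi_Q)^p\,w \;\lesssim\; \sigma(Q), \qquad \int_Q M(w\chi_Q)^{p'}\,\sigma \;\lesssim\; w(Q),
\]
and then to decompose $T$ into positive dyadic model operators, for instance through Hyt\"onen's dyadic representation of Calder\'on-Zygmund operators as averages of Haar shifts. After linearization the problem reduces to uniform two-weight bounds for sparse/positive operators of the form $T_\sigma f = \sum_Q \langle f\sigma\rangle_Q \chi_Q$. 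Two-weight boundedness of such positive operators is itself characterized by its own pair of testing conditions (Nazarov--Treil--Volberg, Lacey--Sawyer--Uriarte-Tuero), and I would attempt to close the loop by matching these tests against the Sawyer conditions above and summing over the shifts with uniform complexity control.

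The main obstacle is precisely this last matching step, which is why the statement is a conjecture rather than a theorem. The Sawyer conditions carry a maximal function \emph{inside} the $L^p$-norm, whereas the positive-operator tests involve only the plain averages $\langle\sigma\chi_Q\rangle_Q$; absent strict positivity of the weights, the two families are not comparable, because $M(\sigma\chi_Q)$ may be large on sets where the plain average is small. In the one-weight $A_p$ setting this gap collapses and the equivalence is classical, but outside that regime I expect the matching to break down in an essential way. In view of the stated goals of this paper, I in fact expect this obstruction to be fundamental: the examples constructed in the sequel should demonstrate that the reverse implication genuinely fails once weights are permitted to vanish on sets of positive measure, so that my "proof proposal" for this direction is best read as a roadmap whose breaking point locates the counterexamples.
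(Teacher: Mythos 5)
There is no proof of this statement in the paper to compare against: it is stated as a \emph{conjecture}, and the point of the paper is to disprove both implications once weights are allowed to vanish on sets of positive measure. Theorem \ref{t.main} produces weights satisfying \eqref{e.2wMmain} and \eqref{e.2wMdualmain} for which the Hilbert transform is nevertheless unbounded, refuting the direction $\eqref{e.2wM},\eqref{e.2wMdual}\Rightarrow\eqref{e.2wT}$. You correctly anticipated that this direction is the one that fails, and your diagnosis of where the ``matching step'' breaks (Sawyer's tests carry $M$ inside the $L^{p}$ norm, while the positive-operator tests carry only plain averages) is a reasonable heuristic for why the counterexamples of Sections 4 and 5 can exist.

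However, your forward-implication argument also has a genuine gap, and the paper refutes that direction as well. The kernel nondegeneracy test applied to $f=\sigma 1_{Q}$ and integrated over a sibling cube $Q^{*}$ yields only a separated two-weight $A_{p}$-type condition $\left(\sigma(Q)/|Q|\right)^{p}w(Q^{*})\lesssim\sigma(Q)$. Such plain-average conditions are strictly weaker than Sawyer's testing condition $\int_{Q}M(\sigma 1_{Q})^{p}\,w\lesssim\sigma(Q)$, and ``swapping the roles of $Q$ and $Q^{*}$ and taking suprema over dyadic ancestors'' does not bridge the gap: the supremum defining $M(\sigma 1_{Q})$ sits inside the $L^{p}(w)$ integral and cannot be recovered cube-by-cube from the $A_{p}$ quantity --- indeed the whole content of Sawyer's theorem (Theorem \ref{sawyer}) is that the $A_{p}$ condition alone does not characterize the two-weight maximal bound. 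Theorem \ref{m.c.lsut} makes this failure concrete: for the Cantor measure $\gamma$ and the discrete measure $\lambda$ supported on the zeros of $H\gamma$, the Hilbert transform maps $L^{2}(\gamma)$ to $L^{2}(\lambda)$ while $M(\cdot\,\gamma)$ does not, because the averages $\gamma(I^{r}_{l})/|I^{r}_{l}|\approx(3/2)^{r}$ blow up and are seen by $M$ at exactly the points $\zeta^{r}_{l}$ charged by $\lambda$, where $H\gamma$ vanishes by cancellation. So neither direction of the conjecture survives in this generality, and no proof along your proposed lines can exist.
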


\noindent For precise definitions of Maximal and Hilbert we refer the reader to \eqref{e.maxf} and \eqref{e.hilbertt} respectively.


Even in the one weight setting, if we allow ourself to go outside the Muckenhoupt $A_p$ class (one should be careful with the way of defining $L^{p}(w)$ when the weight is compactly supported), we will find examples of weights $w$ for which the Hardy-Littlewood maximal $M$ is bounded but the Hilbert transform $H$ is not . The main results presented in the paper are listed below.

\begin{theorem}\label{t.maint2}
Let $1<p<\infty$. There exists a nontrivial weight $u$ for which the Hardy-Littlewood maximal operator $M$ is bounded from $ L^{p}(u)$ to $L^{p}(u)$ but the Hilbert transform $H$ is unbounded from $L^{p}(u)$ to $L^{p}(u)$.
\end{theorem}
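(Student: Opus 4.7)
My plan is to construct a weight $u$ that vanishes on a set of positive Lebesgue measure, mirroring the self-similar constructions in \cite{1008.3943} and \cite{1011.1767}. The reason $u$ must vanish on a positive-measure set is the Hunt--Muckenhoupt--Wheeden theorem: for strictly positive (indeed $A_p$) weights, boundedness of $M$ and $H$ on $L^p(u)$ are equivalent. So a counterexample must sit outside the $A_p$ regime, and the cleanest way to do so is to let $u$ be essentially a (possibly piecewise-constant) indicator function of a set $E\subset\mathbb R$ whose complement has positive measure. With the standard convention that $L^p(u)$ is the weighted $L^p$-space, its elements are effectively supported on $E$.

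The support $E$ I would take to be a disjoint union of intervals $\{I_k\}$ with a hierarchical, self-similar geometry in which consecutive components are separated by distances comparable to their own sizes. Heuristically, this is the regime where averaging across several components dilutes mass (helpful for $M$), but the logarithmic contributions of the Hilbert kernel $\int_J dy/(x-y)$ accumulate coherently across components (harmful for $H$). The precise lengths and spacings of the $I_k$ are the tuning parameters; they will come from an iterative definition that mirrors the self-similar weights of the cited papers.

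To bound $M:L^p(u)\to L^p(u)$ I would verify a testing condition in the style of Sawyer, $\int_Q M(u^{1-p'}\chi_Q)^p u \lesssim \int_Q u^{1-p'}$ uniformly in cubes $Q$. For $f$ supported on $E$ one splits $Mf$ restricted to a component $I_k$ into a local part, controlled by the unweighted $L^p$ boundedness of $M$, and a global part coming from cubes that cross the gaps; the geometric separation of the $I_k$ makes the latter a convergent geometric-type series, bounded uniformly. To show $H$ is unbounded I would test with functions $f_N=\sum_{k=1}^{N}a_k\chi_{J_k}$ for $J_k\subset I_k$ and coefficients with coherent signs, so that for $x$ in a fixed component of $E$ the contributions $H\chi_{J_k}(x)\approx \log\bigl|(x-\ell_k)/(x-r_k)\bigr|$ add up to a divergent (harmonic-type) sum, while $\|f_N\|_{L^p(u)}$ grows only polynomially in $N$.

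The main obstacle, and the technical heart, is getting these two requirements to coexist: the geometry must be sharp enough for the Hilbert transforms to accumulate across components, yet mild enough that averages over cubes straddling these components remain controlled. Calibrating the dyadic scales and the separation parameters so that $M$ is (just barely) bounded while $H$ is (just barely) unbounded is the delicate part, and is where I expect the iterative, self-similar nature of the construction to be essential.
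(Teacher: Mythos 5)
Your high-level plan is pointed in the right direction (a weight vanishing on a set of positive measure, a self-similar construction in the spirit of \cite{1008.3943} and \cite{1011.1767}, Sawyer testing for $M$, coherent accumulation for $H$), but there is a concrete misstep at the foundation. You propose to take $u$ ``essentially a (possibly piecewise-constant) indicator function of a set $E$.'' If $u\approx 1_{E}$ (or, more generally, if $u$ is bounded above and below by positive constants on its support), then with the paper's convention that elements of $L^p(u)$ are supported on $\operatorname{supp}(u)$, \emph{both} operators are trivially bounded: for $f$ supported on $E$ one has $\int_E|Hf|^p u\lesssim \int_{\mathbb R}|Hf|^p\lesssim \int_E|f|^p\approx \|f\|_{L^p(u)}^p$ by the unweighted $L^p$ theory, and likewise for $M$. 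So vanishing on a positive-measure set is necessary but nowhere near sufficient; the weight must oscillate over an unbounded range of values on its own support, and the choice of those values is exactly where the counterexample lives. Your proposal defers this (``the tuning parameters\dots will come from an iterative definition'') and never identifies the mechanism that makes $M$ and $H$ behave differently for the \emph{same} $u$, which you yourself flag as the technical heart.

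The paper's resolution is quite specific and worth contrasting with your sketch. One takes $w_k$ to be the Reguera--Thiele measure, whose two key properties on its support are the pointwise bounds $Mw_k(x)\le 13\,w_k(x)$ and $|Hw_k(x)|\ge (k/3)\,w_k(x)$ on $I(J)^m$; the factor $k$ comes from a single logarithm $\log(|J|/|I(J)|)\approx k$ produced by the huge mass sitting in $J$ adjacent to the tiny residual interval $I(J)$, with the signs $\epsilon(J)$ chosen so that the contributions of coarser generations do not cancel it. One then sets $u_k=w_k^{1-p}$ (a genuinely unbounded piecewise-constant weight, not an indicator) and tests with the single function $f_k=w_k$, so that $\|f_k\|_{L^p(u_k)}^p=w_k([0,1))$ while $\|Hf_k\|_{L^p(u_k)}^p\ge (k/3)^p\tfrac13 w_k([0,1))$; boundedness of $M$ follows from Sawyer's testing condition via $Mw_k\le 13 w_k$. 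Finally a gliding hump over $k$ (translates $w_k(\cdot-3^k)$ and coefficients $k^{-\epsilon}$, $1/p<\epsilon<1$) produces one fixed weight $u$. Your alternative mechanism --- summing many small logarithms $H\chi_{J_k}(x)$ from distant components into a divergent harmonic-type series --- is not shown to be compatible with boundedness of $M$ for the same weight, and with an indicator-type $u$ it cannot be, for the reason above. As written, the proposal has a genuine gap precisely at the step that constitutes the proof.
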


\begin{theorem}\label{t.main}
Let $1<p<\infty$ and let $p'$ be the dual exponent, $\frac{1}{p}+\frac{1}{p'}=1$. There exist nontrivial weights $w$ and $v= \left (\frac{Mw}{w}\right)^{p} w$ for which the Hardy-Littlewood maximal operator satisfies

\begin{eqnarray}\label{e.2wMmain}
M:& L^p(\left (\frac{Mw}{w}\right)^{p} w) \mapsto L^{p}(w)\\ \label{e.2wMdualmain}
M:& L^{p'}(w^{1-p'}) \mapsto L^{p'}(\frac{w}{(Mw)^{p'}})
\end{eqnarray}

but the Hilbert transform $H$ is unbounded from $L^p(\left (\frac{M w}{w}\right)^{p} w)$ to $L^{p}(w)$.

\end{theorem}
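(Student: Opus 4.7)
The plan is to split the argument into two essentially independent parts: (i) verifying that the structural choice $v=(Mw/w)^p w$ makes both \eqref{e.2wMmain} and \eqref{e.2wMdualmain} automatic for \emph{any} nontrivial weight $w$, using only classical weighted theory; and (ii) constructing an explicit weight $w$, vanishing on a set of positive measure, for which the Hilbert transform fails to be bounded from $L^p(v)$ to $L^p(w)$.

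For part (i), the forward estimate \eqref{e.2wMmain} should reduce to the Fefferman--Stein inequality
\[
\int (Mf)^p w\,dx \le C_{p,d}\int |f|^p\, Mw\,dx,
\]
combined with the pointwise bound $Mw\le v$. The latter holds because $v=(Mw)^p w^{1-p}=Mw\cdot (Mw/w)^{p-1}$ and $Mw\ge w$ a.e., so $(Mw/w)^{p-1}\ge 1$ a.e. For the dual estimate \eqref{e.2wMdualmain} I would invoke Sawyer's two-weight testing characterization of the maximal operator. Using the identity $(1-p)(1-p')=1$ one computes $v^{1-p'}=w/(Mw)^{p'}$ and $(w^{1-p'})^{1-p}=w$, so Sawyer's testing condition for $M\colon L^{p'}(w^{1-p'})\to L^{p'}(v^{1-p'})$ becomes
\[
\int_Q \bigl(M(\mathbf{1}_Q w)\bigr)^{p'}\frac{w}{(Mw)^{p'}}\,dx \le C\int_Q w\,dx
\]
for every cube $Q$. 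Since $M(\mathbf{1}_Q w)\le Mw$ pointwise, the integrand is bounded above by $w$, and the testing inequality holds with $C=1$.

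The main work and principal obstacle lies in part (ii): the construction of a weight $w$ that defeats $H$. I would adapt the self-similar dyadic framework of Reguera \cite{1008.3943} and Reguera--Thiele \cite{1011.1767}, assembling $w$ from weighted indicators on a nested (or sparse) family of dyadic intervals at geometrically separated scales, with mass ratios tuned so that $w$ vanishes on large corridors between successive active intervals. On those corridors $v=(Mw/w)^p w$ is effectively infinite, forcing admissible test functions $f\in L^p(v)$ to vanish there; one then tests $H$ against such an $f$ supported on a small piece of $\mathrm{supp}\,w$, and exploits the non-locality of the kernel $1/(x-y)$ to accumulate contributions from each of the $N$ scales in the construction. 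The delicate point is to balance two opposing demands: $Mw/w$ must remain controlled on the part of $\mathrm{supp}\,w$ carrying $f$ so that $\|f\|_{L^p(v)}$ stays of order one, while the corridors must be wide enough that $Hf$ deposits substantial mass back on $\mathrm{supp}\,w$. A logarithmic separation of scales, as used in \cite{1011.1767}, should resolve this tension and yield $\|Hf\|_{L^p(w)}/\|f\|_{L^p(v)}\to\infty$ with the number of scales $N$. A final rescaling or extrapolation argument transfers the counterexample from a convenient exponent to all $1<p<\infty$.
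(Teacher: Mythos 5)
Your part (i) is correct and in fact takes a cleaner route than the paper. For \eqref{e.2wMmain} the paper invokes the extrapolation theorem of Cruz-Uribe and P\'erez \cite{MR1761362}, whereas your reduction to the Fefferman--Stein inequality $\int (Mf)^{p}w\,dx\lesssim \int |f|^{p}Mw\,dx$ together with the pointwise bound $Mw\le (Mw/w)^{p}w=v$ (valid a.e.\ since $Mw\ge w$ a.e.) is elementary and self-contained. For \eqref{e.2wMdualmain} the paper also goes through Sawyer's testing condition, but does so by linearizing the dyadic maximal function over $\mathcal D\cup\mathcal D_{\textrm{shift}}$ and splitting into the cases $I\subset Q$ and $Q\subset I$; your observation that $M(1_{Q}w)\le Mw$ makes the integrand $\bigl(M(1_Qw)\bigr)^{p'}w/(Mw)^{p'}\le w$, so the testing inequality holds with constant $1$, short-circuits all of that. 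Both computations are correct (the identities $v^{1-p'}=w/(Mw)^{p'}$ and $(w^{1-p'})^{1-p}=w$ check out), and like the paper's Proposition \ref{maxbounded} your argument gives boundedness for an arbitrary weight $w$, so no interaction with the counterexample is needed.

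Part (ii) is where the real content of the theorem lies, and there your proposal is a plan rather than a proof, with two concrete gaps. First, the mechanism you describe (corridors where $w$ vanishes, non-locality of the kernel, ``logarithmic separation of scales'') does not pin down the actual engine of the Reguera--Thiele construction: the residual intervals $I(J)$ are placed on alternating sides of the $J$'s via the signs $\epsilon(J)$ precisely so that the contributions to $Hw_{k}$ from all coarser generations reinforce, giving the \emph{pointwise} bound $|Hw_{k}|\ge (k/3)w_{k}$ on $I(J)^{m}$ (Lemma \ref{ch3hestimate}), while simultaneously $Mw_{k}\le 13\,w_{k}$ on $\mathrm{supp}\,w_{k}$. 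With these two facts one simply tests the dual inequality \eqref{e.dual} with $f=1_{[0,1)}$: since $\sigma_{k}=w_{k}/(Mw_{k})^{p'}\gtrsim w_{k}^{1-p'}$ on the support, one gets $\|H(w_{k}1_{[0,1)})\|_{L^{p'}(\sigma_{k})}^{p'}\gtrsim k^{p'}w_{k}([0,1))$ directly, for every $1<p<\infty$ at once; your closing ``rescaling or extrapolation argument transfers the counterexample from a convenient exponent to all $p$'' is both unnecessary and unjustified (it is not clear such a transference exists). Second, this produces a \emph{sequence} of weights $w_{k}$ with operator norms tending to infinity, whereas the theorem asserts a single weight $w$ for which $H$ is unbounded; the paper bridges this with a gliding hump, superposing translates $w=\sum_{k}w_{k}(\cdot-3^{k})$ and checking that the maximal-function bounds survive the superposition while the Hilbert bounds do not. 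Your proposal does not address this step at all, and it is not automatic: one must verify that the cross-interactions between translates neither destroy the testing condition for $M$ nor cancel the lower bound for $H$.
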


\begin{theorem} \label{m.c.lsut}
There exist measures $\gamma$ and $\lambda$ such that 
\begin{eqnarray}
&M(\cdot \gamma): & L^2(\gamma) \not\rt L^2(\lambda) \label{cantor.max} \\
&H( \cdot \gamma): & L^2(\gamma) \rt L^2(\lambda) . \label{lsut.hilbert}
\end{eqnarray}
\end{theorem}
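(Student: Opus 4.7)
The plan is to construct $\gamma$ and $\lambda$ from a multi-scale, symmetric arrangement of point masses whose Hilbert-kernel interactions cancel in pairs while the maximal operator only accumulates their absolute values. I will choose a sequence of disjoint intervals $I_n = [c_n - a_n, c_n + a_n]$ with $a_n \downarrow 0$ rapidly, place $\gamma$-mass of size $\gamma_n$ at each of the two symmetric endpoints $c_n \pm a_n$, and place a $\lambda$-mass near the centers $c_n$. The free parameters $(\gamma_n, a_n, \lambda_n)$ will be tuned below so that the exact symmetry of $\gamma$ about each $c_n$ creates cancellation for $H$ but not for $M$.

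For the failure \eqref{cantor.max}, I will test $M$ on $f\equiv 1 \in L^2(\gamma)$. The average of $f\,d\gamma$ over $I_n$ gives $M(f\gamma)(x) \gtrsim \gamma_n/a_n$ for $x \in I_n$, so that
\[
\|M(f\gamma)\|_{L^2(\lambda)}^2 \gtrsim \sum_n \Bigl(\frac{\gamma_n}{a_n}\Bigr)^{\!2} \lambda(I_n).
\]
A judicious parameter choice, e.g.\ along a geometric sequence, forces this sum to diverge while $\|f\|_{L^2(\gamma)}^2 = 2\sum_n \gamma_n$ remains finite.

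For the positive statement \eqref{lsut.hilbert}, I decompose each $f \in L^2(\gamma)$ into symmetric and antisymmetric parts $f^{\mathrm{sym}}, f^{\mathrm{asym}}$ about each $c_n$. The symmetric part annihilates the kernel exactly at $c_n$ and leaves only an $O(a_n^2/|x-c_n|^3)$ tail, summable across scales by direct computation. The antisymmetric part is controlled via the two-weight $T1$-characterization of Nazarov--Treil--Volberg for the Hilbert transform: the testing condition on each $I_n$ reduces, after using the same pairwise cancellation, to a scale-by-scale inequality compatible with the parameters chosen in the previous step. An equivalent route is to invoke directly the Lacey--Sawyer--Uriarte-Tuero characterization cited in the introduction, verifying the resulting energy and testing conditions on the dyadic hierarchy generated by the $I_n$.

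The main obstacle is the compatibility of these two directions, because both the divergent $M$-sum and the finite $H$-testing sum are governed by essentially the same ratio $\gamma_n/a_n$; the natural simple choices make the two operators succeed or fail together. The delicate technical point is therefore to verify the NTV (or LSUT) testing conditions uniformly in scale, extracting enough decay from the kernel cancellation produced by the symmetric placement of the $\gamma$-masses to beat the $a_n^{-1}$ size that otherwise drives the maximal-operator blow-up.
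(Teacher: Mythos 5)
There is a genuine gap here, and you have in fact put your finger on it yourself: the ``main obstacle'' you flag in your last paragraph is not a technical nuisance to be cleaned up later but an obstruction that defeats the architecture you propose. With two equal $\gamma$-masses $\gamma_n$ placed symmetrically at $c_n\pm a_n$ and the $\lambda$-mass sitting at the centre $c_n$, the symmetric part of $f$ is indeed annihilated at $c_n$, but the \emph{antisymmetric} test function $f_n$ taking values $+1$ at $c_n-a_n$ and $-1$ at $c_n+a_n$ gives $|H(f_n\gamma)(c_n)|=2\gamma_n/a_n$ with $\|f_n\|_{L^2(\gamma)}^2=2\gamma_n$. Boundedness of $H(\cdot\,\gamma)$ therefore forces $(\gamma_n/a_n)^2\lambda_n\lesssim \gamma_n$ for every $n$. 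On the other hand your divergence criterion for $M$, tested on $f\equiv 1$ with $\sum_n\gamma_n<\infty$, demands $\sum_n(\gamma_n/a_n)^2\lambda_n=\infty$, i.e.\ $\sum_n\gamma_n\cdot\bigl((\gamma_n/a_n)^2\lambda_n/\gamma_n\bigr)=\infty$ with each factor in the second position bounded --- impossible when $\sum_n\gamma_n<\infty$. So no choice of parameters along the lines you describe can satisfy both requirements: both operators see exactly the same quantity $\gamma_n/a_n$ at the support of $\lambda$, and the local symmetry only removes it for one specific class of inputs.

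The paper escapes this trap by using a genuinely hierarchical, self-similar pair: $\gamma$ is the Cantor measure and $\lambda=\sum_{r,l}(2/9)^r\delta_{\zeta^r_l}$, where $\zeta^r_l$ is the zero of $H\gamma$ in the removed gap $G^r_l$. The unboundedness of $M$ is then elementary and exploits positivity: $M(1_{I^r_1}\gamma)\gtrsim(3/2)^{r+4t}$ on the gaps of generation $r+4t$, and summing $2^{4t}\cdot(2/9)^{r+4t}\cdot(3/2)^{2(r+4t)}=2^{-r}$ over $t$ diverges, so even the Sawyer testing condition on a single $I^r_1$ fails. The boundedness of $H(\cdot\,\gamma)$ is \emph{not} proved by a local symmetry computation; it is the substantial verification of the energy and testing conditions carried out by Lacey, Sawyer and Uriarte-Tuero for precisely this pair, which the paper cites rather than reproves. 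That verification is exactly the step your sketch defers to ``the NTV (or LSUT) characterization'' without carrying it out, and it is where all the work lies: one needs control of $H(f\gamma)$ at the points $\zeta^r_l$ for \emph{all} $f\in L^2(\gamma)$, which requires the fractal multi-scale structure and cannot be obtained from finitely many symmetric pairs per scale.
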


For the proof of Theorems \ref{t.main} and \ref{t.maint2}, we will be using the ideas of Reguera \cite{1008.3943} and Reguera-Thiele \cite{1011.1767}. In the papers the authors present examples of measures that can avoid the cancellation properties of singular integrals, providing negative answers to the expected behaviour of some dyadic operators and the Hilbert transform at the endpoint $p=1$. More precisely they prove that the inequality below, also known as Muckenhoupt-Wheeden Conjecture, is false for a dyadic operator and for the Hilbert transform respectively. 

\begin{eqnarray} \label{e.reguera-thiele}
\displaystyle\sup_{t>0} tw (\{x \in \ral : |Tf(x)| > t \}) \lesssim
\nint_{\ral} |f|  M w(x)dx. \end{eqnarray}

\noindent But the endpoint estimate is a delicate one. Let us recall that a weight $w$ is in $A_1$ if there exist constants $C$ such that $Mw(x)\leq C w(x)$ a.e., the infimum of such constants $C$ is denoted by $\norm w.A_{1}.$. The subsequent work of Nazarov, Reznikov, Vasuynin and Volberg \cite{NRVV} shows that there exists a weight $w$ in $A_1$ such that the Hilbert transform, $H$, fails to follow the expected sharp estimate \eqref{e.a1}.  

\begin{equation} \label{e.a1}
\displaystyle\sup_{t>0} tw (\{x \in \ral : |Tf(x)| > t \}) \lesssim 
\norm w. A_{1}. \nint_{\ral} |f|  w(x)dx. \end{equation}
\noindent An explicit construction of the $A_1$ weight $w$ that violates \eqref{e.a1} has not been provided yet.

Further, we wish to emphasize that the weights constructed in \cites{1011.1767, 1008.3943} fail to be $A_p$ weights. Indeed, the aforementioned weights are differentiated from those in the $A_p$ classes by their compact support and absence of a doubling property. In Theorem \ref{m.c.lsut}, we provide an example where the necessary direction in the Muckenhoupt-Wheeden Conjecture \ref{c.lpMW} is not satisfied.  As a result, we conclude there is no a priori relationship between the Hilbert transform $H$ and the maximal operator $M$ in the two weight setting.

The paper is organized as follows: in Section 2 we define some of the concepts and needed background. Section 3 describes the construction of measures $w_k$ from \cite{1011.1767}. Section 4 describes the proof of Theorem \ref{t.maint2}. Section 4 contains the discussion of the examples in the two weight setting that disprove sufficiency, i.e. Theorem \ref{t.main}. Section 6 contains the construction of the measures in \cite{1001.4043} and the proof of Theorem \ref{m.c.lsut}.

\subsection{Acknowledgments} The authors would like to thank Dr. Michael Lacey for pointing out the conjecture, numerous discussions, and his suggestions. The authors would also like to thank Dr. Brett Wick for his time, suggestions, and discussions.

\section{Initial concepts}  

The space we will be working on is $\mathbb R$. A weight $w$ is a non-negative locally integrable function. Throughout the paper $1_{E}$ will be the characteristic function associated to the set $E\subset \mathbb R$. The numbers $p,p'$ will satisfy $1<p,p'<\infty$ and $\frac{1}{p}+\frac{1}{p'}=1$. Throughout the paper we will use the same symbol for a measure $w$ and its Lebesgue density.

In the sequel when referring to $M$ we will understand the Hardy-Littlewood maximal function, i.e., for $f\in L^{1}_{loc}(\mathbb R)$
\begin{equation}\label{e.maxf}
M f(x)=\sup_{Q\,\, \text{cube}}\frac{1_{Q}}{|Q|} \int_{Q} f(y)dy.
\end{equation}
We will also consider the Hilbert transform, 
\begin{equation}\label{e.hilbertt}
Hf(x)= \textup{p.v.} \int_{\mathbb R} \frac{f(y)}{y-x}dy.
\end{equation}

When looking at two weigthed inequalities, we use a different formulation. For $w,v$ two positive Borel measures the statements below, first introduced by E. Sawyer in \cite{MR676801}, are equivalent. \eqref{e.Sawyer} and \eqref{e.dual} represent the self-dual form for two weighted inequalities and we will use them frequently throughout the paper.

\begin{align}
\label{e.general} \norm Tf.L^{p}(w).& \leq  C\norm f.L^{p}(v).,\\
\label{e.Sawyer} \norm T(f\sigma).L^{p}(w).& \leq  C\norm f.L^{p}(\sigma).,\quad \sigma=v^{1-p'}1_{\textup{supp}(v)}.\\
\label{e.dual} \norm T^{*}(fw).L^{p'}(\sigma).& \leq  C\norm f.L^{p'}(w).
\end{align}

A characterization for the boundedness of the Hardy-Littlewood maximal operator in the two weight setting was described by E. Sawyer in \cite{MR676801}. Let us recall the main result in \cite{MR676801}:


\begin{theorem} \label{sawyer}
Let $w$ and $v$ be weights, and let $\sigma$ be the dual weight of $v$, i.e., 
$\sigma=v^{1-p'}$. Then $M$ is bounded from $L^{p}(v)$ to $L^{p}(w)$ if and only if the following inequality holds
\begin{equation}\label{e.testingM}
\int_{Q} \left |M(\sigma 1_{Q})\right |^{p}\leq C \sigma(Q) \quad \text{for all } \,Q \text{ cubes}
\end{equation}
and it is uniformly on cubes. 
\end{theorem}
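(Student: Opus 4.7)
I would split the equivalence into the easy direction (necessity) and the main direction (sufficiency).

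\emph{Necessity.} Using the self--dual reformulation \eqref{e.Sawyer}, the boundedness $M\colon L^{p}(v)\to L^{p}(w)$ is equivalent to
\[
\Norm M(f\sigma).L^{p}(w). \le C\,\Norm f.L^{p}(\sigma).
\]
Plugging in $f=1_{Q}$ and restricting the integration on the left to $Q$ yields exactly \eqref{e.testingM} (read with $dw$ on the left, as must be intended dimensionally). So this direction is essentially one line.

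\emph{Sufficiency --- setup and reductions.} The main work is showing that \eqref{e.testingM} implies $\|M(f\sigma)\|_{L^{p}(w)}\lesssim \|f\|_{L^{p}(\sigma)}$ for $f\ge 0$. First I would reduce to the dyadic maximal operator $M^{d}$ by replacing $M$ by $\max_{i}M^{d_{i}}$ over three shifted dyadic grids (Okikiolu's trick); the testing condition transfers to each grid up to constants. By homogeneity we may assume $f$ is a bounded compactly supported nonnegative function.

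\emph{Sufficiency --- stopping cubes.} For each $k\in\mathbb Z$ let $\Omega_{k}=\{M^{d}(f\sigma)>2^{k}\}$ and let $\{Q_{j}^{k}\}_{j}$ be the maximal dyadic cubes contained in $\Omega_{k}$. Then $2^{k}<\frac{1}{|Q_{j}^{k}|}\int_{Q_{j}^{k}}f\sigma\le 2^{k+1}$ (up to a harmless factor), and $M^{d}(f\sigma)\le 2^{k+1}$ on $Q_{j}^{k}\setminus\Omega_{k+1}$. Out of the collection $\{Q_{j}^{k}\}_{j,k}$ I would carve out a sub--collection $\mathcal P$ of \emph{principal} cubes via a stopping time on the $\sigma$--measure: a child $Q_{j'}^{k'}\subsetneq Q_{j}^{k}$ is declared principal once the $\sigma$--average of $f$ on it first doubles relative to $Q_{j}^{k}$. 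A routine bookkeeping argument then shows that $\mathcal P$ is $\sigma$--sparse, i.e.\ one can choose disjoint sets $E_{P}\subset P$ with $\sigma(E_{P})\ge \frac{1}{2}\sigma(P)$, and every stopping cube $Q_{j}^{k}$ has a unique principal ancestor $\pi(Q_{j}^{k})\in\mathcal P$ with $\langle f\rangle_{Q_{j}^{k}}^{\sigma}\lesssim \langle f\rangle_{\pi(Q_{j}^{k})}^{\sigma}$.

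\emph{Sufficiency --- applying the test and Carleson embedding.} I would then layer--cake the norm:
\begin{align*}
\int \bigl(M^{d}(f\sigma)\bigr)^{p}\,dw
&\lesssim \sum_{k,j} 2^{kp}\,w(Q_{j}^{k}\setminus\Omega_{k+1})\\
&\lesssim \sum_{P\in\mathcal P}\bigl(\langle f\rangle_{P}^{\sigma}\bigr)^{p}\sum_{\pi(Q_{j}^{k})=P}\!\!\int_{Q_{j}^{k}\setminus\Omega_{k+1}}\!\!\!\!\bigl(M^{d}(\sigma 1_{Q_{j}^{k}})\bigr)^{p}dw,
\end{align*}
where I used that on $Q_{j}^{k}\setminus\Omega_{k+1}$ the restriction of $f\sigma$ to $Q_{j}^{k}$ is what contributes to $M^{d}(f\sigma)$, up to a constant of $\langle f\rangle_{Q_{j}^{k}}^{\sigma}$. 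The inner sum is controlled, via the localization of $M^{d}$ and disjointness, by $\int_{P}(M^{d}(\sigma 1_{P}))^{p}dw$, which by the testing hypothesis \eqref{e.testingM} is at most $C\sigma(P)$. Hence the whole quantity is dominated by
\[
C\sum_{P\in\mathcal P}\bigl(\langle f\rangle_{P}^{\sigma}\bigr)^{p}\sigma(P) \;\lesssim\; \Norm f.L^{p}(\sigma).^{p},
\]
where the last step is a Carleson--type embedding using the $\sigma$--sparsity of $\mathcal P$: $\sum_{P}(\langle f\rangle_{P}^{\sigma})^{p}\sigma(P)\lesssim \sum_{P}\int_{E_{P}}(M_{\sigma}^{d}f)^{p}d\sigma\le \|M_{\sigma}^{d}f\|_{L^{p}(\sigma)}^{p}$, and $M_{\sigma}^{d}$ is bounded on $L^{p}(\sigma)$ by the standard (unweighted) maximal theorem on the measure space $(\mathbb R,\sigma)$.

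\emph{Main obstacle.} The only delicate point is the principal--cube construction and the bookkeeping that allows one to replace $\langle f\rangle_{Q_{j}^{k}}^{\sigma}$ by $\langle f\rangle_{\pi(Q_{j}^{k})}^{\sigma}$ while simultaneously covering the set $Q_{j}^{k}\setminus\Omega_{k+1}$ by a controlled portion of the principal ancestor. Once that sparse structure is in place, the testing condition is applied \emph{exactly once per principal cube}, and Carleson embedding closes the estimate.
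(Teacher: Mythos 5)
This is a quoted theorem: the paper imports Sawyer's two‑weight characterization of the maximal operator directly from \cite{MR676801} and gives no proof, so there is nothing in the paper to compare your argument against line by line. Your proposal is, however, a correct self‑contained proof, and it follows what is essentially the standard (and in substance Sawyer's original) route: necessity by testing $f=1_{Q}$ in the self‑dual form \eqref{e.Sawyer}; sufficiency by passing to shifted dyadic grids, decomposing $\Omega_{k}=\{M^{d}(f\sigma)>2^{k}\}$ into maximal dyadic cubes $Q_{j}^{k}$ with pairwise disjoint sets $E_{j}^{k}=Q_{j}^{k}\setminus\Omega_{k+1}$, feeding the testing condition in through the cubes, and closing with a $\sigma$‑Carleson embedding via principal cubes and the universal $L^{p}(\sigma)$ bound for $M^{d}_{\sigma}$. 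Two remarks. First, you are right that \eqref{e.testingM} as printed omits the measure: it must read $\int_{Q}|M(\sigma 1_{Q})|^{p}\,dw\le C\sigma(Q)$, which is how the paper actually uses it in the proofs of Propositions \ref{maxbounded1} and \ref{maxbounded}. Second, the one loosely worded step is the claim that on $Q_{j}^{k}\setminus\Omega_{k+1}$ the contribution of $f\sigma 1_{Q_{j}^{k}}$ is controlled ``up to a constant of $\langle f\rangle_{Q_{j}^{k}}^{\sigma}$'': read literally this suggests a pointwise domination of $M^{d}(f\sigma 1_{Q_{j}^{k}})$ by $\langle f\rangle_{Q_{j}^{k}}^{\sigma}M^{d}(\sigma 1_{Q_{j}^{k}})$, which is false in general. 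What you actually need (and what makes your display correct) is the chain $2^{k}<\langle f\sigma\rangle_{Q_{j}^{k}}=\langle f\rangle_{Q_{j}^{k}}^{\sigma}\cdot\sigma(Q_{j}^{k})/|Q_{j}^{k}|$ together with $\sigma(Q_{j}^{k})/|Q_{j}^{k}|\le M^{d}(\sigma 1_{Q_{j}^{k}})(x)$ for $x\in Q_{j}^{k}$, where $\langle f\rangle_{Q}^{\sigma}=\sigma(Q)^{-1}\int_{Q}f\,d\sigma$. With that substitution the testing hypothesis is applied once per principal cube, the disjointness of the $E_{j}^{k}$ collapses the inner sum to $\int_{P}(M^{d}(\sigma 1_{P}))^{p}\,dw\le C\sigma(P)$, and the Carleson embedding finishes the proof; the argument is sound.
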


\section{Construction of weights $w_{k}$}
The construction of measures that exploit lack of cancellation of singular integral operators appears in the work of Reguera \cite{1008.3943}, followed by the simplified construction in Reguera-Thiele's paper \cite{1011.1767}. The measures are supported in residual intervals $I(J)$ associated to a Cantor type construction. The $I(J)$ are selected as intervals on which the Hilbert transform avoids cancellations, and this property is preserved in subsequent steps of the construction by holding the total measure of the neighbour intervals constant and choosing an appropiate location for $I(J)$. 

It is the Reguera-Thiele construction that we present in this paper. For the sake of completeness, we will write the steps of such construction below. We consider triadic intervals, $\mathcal T =\left \{ [3^{j}n, 3^j(n+1))\mid n,j\in \mathbb Z \right\}$. Given an interval $I=[a,b)$, we use the notation $\text{rep}:=b$ and $\text{lep}:=a$ for the boundary points of the interval.
We also denote by ${I^m}$ the triadic child of $I$ which contains the center of $I$. For a fixed integer $k$, we define $\mathbf K_0$ to be $\{[0,1)\}$ and recursively for $i\ge 1$:
$$\mathbf J_i:=\{K^m: K\in \mathbf K_{i-1}\}\ \ ,$$
$$\mathbf K_i:=\{K : K\in \mathcal T,\  |K|=3^{-ik},\  K\subset \bigcup_{J\in\mathbf J_i}J\}\ \ .$$

We define $\mathbf J:=\bigcup_{i\ge 1} \mathbf J_i$ and we use $\{ \epsilon(J) \}_{J \in \mathbf{J}}$ to denote a collection of appropriately chosen signs indexed by $\mathbf J$. In particular, the value of each $\epsilon(J)$ is selected depending on the values $\epsilon(J')$ with $|J'|>|J|$; 
the exact choice is determined in the proof of Lemma \ref{ch3hestimate} below. For a detailed explanation we refer the reader to \cite{1011.1767}.
Notice that for each $J\in\mathbf J$ there is an $i$ such that $J=K^{m}$ with $K\in K_{i}$. Consequently, we may define the residual interval associated to $J$, denoted $I(J)$, to satisfy the following:
\begin{enumerate}
\item $|I| = 3^{-(i+1)k}$
\item $\text{rep}(I(J)) = \text{lep}(J)$ if $\epsilon(J) = 1$ 
\item $\text{lep}(I(J)) = \text{rep}(J)$ if $\epsilon(J) = -1$.
\end{enumerate}
We also mention in passing a useful property of $I(J)$: namely, $I(J)$ has the same length as the intervals $K$ in the next generation $\mathbf K_{i+1}$.

\indent Now we define a sequence of absolutely continuous measures on $[0,1]$.
Let $w_{k}^{0}$ be the uniform measure on $[0,1)^m\cup I([0,1)^m)$
with total mass $1$ and recursively define the measure $w_{k}^{i}$ by
the following properties: it coincides with $w_{k}^{i-1}$ on the complement of 
$\bigcup_{K\in \mathbf K_{i}}K$,
for $K\in \mathbf K_i$ we have $w_{k}^{i}(K)=w_{k}^{i-1}(K)$,
and the restriction of $w_{k}^{i}$ to $K$ is supported and uniformly distributed on 
$K^m\cup I(K^m)$.

Let $w_{k}$ be the weak limit of the sequence $w_{k}^{i}$. Observe 
that $w_{k}$ is supported on $\bigcup_{J\in \mathbf J} I(J)$.

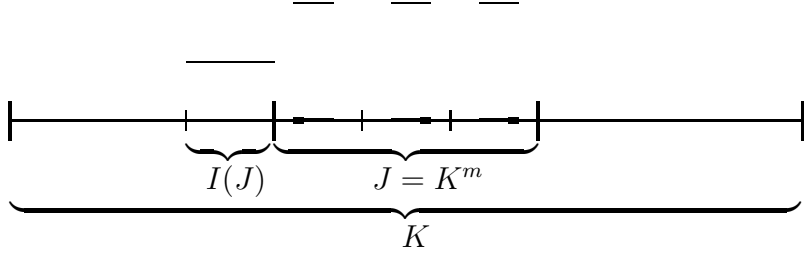
\begin{figure}
\setlength{\unitlength}{1.3mm}
\begin{center}
\begin{picture}(100,23)



\put(37,8){$\underbrace{\rule{3.5cm}{0cm}}$}
\put(10,2){$\underbrace{\rule{10.5cm}{0cm}}$}
\put(28,8){$\underbrace{\rule{1.1cm}{0cm}}$}

\put(50,-3){$K$}
\put(47,3){$J=K^m$}
\put(30,3){$I(J)$}

\put(28,16){\line(1,0){9}}
\put(39,22){\line(1,0){4}}
\put(49,22){\line(1,0){4}}
\put(58,22){\line(1,0){4}}

\put(10,10){\line(1,0){81}}
\multiput(28,9)(9,0){5}{\line(0,1){2}}
\linethickness{1pt}
\multiput(10,8)(27,0){4}{\line(0,1){4}}
\multiput(40,10)(9,0){3}{\line(1,0){3}}
\linethickness{2pt}
\put(39,10){\line(1,0){1}}
\put(52,10){\line(1,0){1}}
\put(61,10){\line(1,0){1}}

\end{picture}
\end{center}
\caption{Second stage of the construction of the measure $w_{k}$ for $k=1$.}
\label{ch3f1.}
\end{figure} 

\begin{lemma}[Reguera-Thiele]\label{ch3hestimate}
For $K\in \mathbf K_{i}$, $J=K^m$ and $k>3000$ we have
\begin{align}
\label{e.hestimate}|Hw_{k}(x)|\ge &(k/3) w_{k}(x) \quad x\in I(J)^m.
\end{align}
\end{lemma}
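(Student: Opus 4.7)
The plan is to estimate $Hw_k(x)$ directly by decomposing the measure into three pieces. Fix $x \in I(J)^m$ with $J \in \mathbf J_i$, let $K \in \mathbf K_{i-1}$ be the parent of $J$ (so $J = K^m$), and write $\rho := w_k(x)$, which is the constant density of $w_k$ on $I(J)$. Split
\[
Hw_k(x) = \text{p.v.}\!\int_{I(J)} \frac{dw_k(y)}{y-x} + \int_J \frac{dw_k(y)}{y-x} + \int_{K^c}\frac{dw_k(y)}{y-x}.
\]
The first integral uses the constant density $\rho$, and since $x$ sits in the middle triadic child $I(J)^m$, it equals $\rho\log\frac{\text{rep}(I(J))-x}{x-\text{lep}(I(J))}$, whose absolute value is at most $\rho\log 2$.

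The main term is $\int_J dw_k(y)/(y-x)$. By construction $w_k(K)$ is split at each stage uniformly between $J$ and $I(J)$, so the \emph{average} density of $w_k$ on $J$ is also $\rho$. The interval $J$ is adjacent to $I(J)$: it lies immediately to the right when $\epsilon(J)=1$ and immediately to the left when $\epsilon(J)=-1$, so $\text{sgn}(y-x)=\epsilon(J)$ for all $y\in J$. Treating $w_k|_J$ as if it had uniform density $\rho$ and noting that $x$ is at distance $\Theta(|I(J)|)$ from the near endpoint of $J$ and $\Theta(|J|)$ from the far endpoint yields
\[
\int_J \frac{dw_k(y)}{y-x} \;\approx\; \epsilon(J)\,\rho\log\frac{|J|}{|I(J)|} \;=\; \epsilon(J)(k-1)\rho\log 3,
\]
a term of size $\sim k\rho$ with sign $\epsilon(J)$. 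Making this replacement rigorous is the main obstacle: $w_k|_J$ is actually concentrated on residuals $I(J')$ of all descendants $J'$, where the density spikes like $3^j$ at scale $3^{-jk}$. I plan to control the resulting error via the self-similar structure—the restriction of $w_k$ to each $K'\in\mathbf K_i$ contained in $J$ is a rescaled copy of the global measure—together with the fact that $1/(y-x)$ varies slowly on scales small compared to $|y-x|$. This should give an error bounded by a constant multiple of $\rho$ independent of $k$.

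For the third piece, decompose $K^c$ into sibling subtrees of the ancestors of $J$ together with the residual intervals $I(J^{(\ell)})$ attached to ancestors $J^{(\ell)}\in\mathbf J_\ell$ for $\ell<i$. Mass estimates $w_k(I(J^{(\ell)}))\sim 3^{-\ell(k-1)}$ against distances $\gtrsim 3^{-(\ell-1)k}$ make the sibling contributions sum to $O(\rho\cdot 3^{1-k})$, which is negligible. The ancestor residuals may individually contribute terms comparable to $\rho$, but the inductive sign-selection rule of \cite{1011.1767}—each $\epsilon(J)$ is chosen depending on the $\epsilon(J')$ with $|J'|>|J|$—is designed precisely so that these ancestor contributions share the sign of the main term and never cancel it. Combining the three estimates gives $|Hw_k(x)|\ge (k-1)\rho\log 3 - O(\rho) \ge (k/3)\rho$ for $k>3000$, proving the lemma.
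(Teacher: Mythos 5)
First, note that the paper does not actually prove Lemma \ref{ch3hestimate}; it is quoted from Reguera--Thiele \cite{1011.1767}, so your attempt has to be measured against the proof given there. Your treatment of the first two pieces is essentially sound: the $I(J)$ term is $\le \rho\log 2$ as you say, and the lower bound for the $J$ term is in fact easier than you fear --- since all of $J$ lies on one side of $x$, one simply sums $w_k(K')/(\operatorname{dist}(x,K')+|K'|)$ over the $3^{k-1}$ intervals $K'\in\mathbf K_{i+1}$ inside $J$ (each carrying mass $\rho|K'|$) to get $\gtrsim \rho\,(k-1)\log 3$ without any appeal to self-similarity.

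The genuine gap is your estimate of the third piece. The siblings of $K_\ell$ inside $J_\ell$ (writing $x\in K_i\subset J_i\subset K_{i-1}\subset\cdots\subset K_0=[0,1)$) are \emph{not} negligible: there are $3^{k-1}-1$ of them, each of mass $w_k(K_\ell)\approx \rho\,3^{\ell-i-1}|K_\ell|$, at distances ranging from $|K_\ell|/3$ up to $3^{k-1}|K_\ell|$, so level $\ell$ contributes up to $\approx \rho\,3^{\ell-i-1}(k-1)\log 3$ in absolute value even after exploiting the left--right cancellation among siblings (and roughly twice that without it). Summing the geometric series over $\ell\le i$ gives $\approx \tfrac12(k-1)\rho\log 3$ --- the same order as your main term, about half of it, not $O(\rho\cdot 3^{1-k})$ and certainly not $O(\rho)$. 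Your final inequality $(k-1)\rho\log 3 - O(\rho)\ge (k/3)\rho$ therefore does not follow from what you have established. This is precisely the difficulty the sign choices in \cite{1011.1767} are built to resolve: $\epsilon(J)$ is selected so that the \emph{entire} contribution $H(w_k 1_{K^c})(x)$ --- siblings at every ancestor scale included, not merely the ancestor residuals $I(J^{(\ell)})$ to which you assign this role --- has the same sign as $H(w_k 1_J)(x)$, so that it reinforces the main term and never needs to be subtracted. Without either invoking that mechanism or carrying out a much tighter accounting (with the left--right cancellation made quantitative), the argument as written does not close.
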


For the proof of this lemma we refer the reader to \cite{1011.1767}. We can also prove

\begin{equation}
\label{e.contmax} Mw_{k}(x)\le 13 w_{k}(x)\quad x\in I(J).
\end{equation}
There exists a proof of this fact in \cite{1011.1767} for $ x\in I(J)^{m}$ with a smaller constant. For the proof of \eqref{e.contmax} we fix $x\in I(J)$ and consider an interval $I$, not necessarily triadic, such that $x\in I$. If $I\subset I(J)$, then 
$$
\frac{w_{k}(I)}{|I|}= \frac{w_{k}(I(J))}{|I(J)|}=w_{k}(x),
$$ 
and that is enough. Suppose $I\nsubseteq I(J)$ and consider the family $\mathcal L$ of intervals $L$ such that $L\cap I(J)\neq \emptyset$ and $|L|=|I(J)|$. We will distinguish two cases. 
\begin{enumerate}
\item Suppose $|I|\geq \frac{|I(J)|}{6}$, then
\begin{eqnarray*}
\frac{w_{k}(I)}{|I|} &  \leq  &\frac{\sum_{L\in\mathcal L}w_{k}(L)}{|I|}\\
 &\leq & 13 \frac{\sum_{L\in\mathcal L}w_{k}(I(J))}{\sum_{L\in\mathcal L}|I(J)|}\\
 & \leq & 13 \frac{w_{k}(I(J))}{|I(J)|},
\end{eqnarray*}
where we have used the fact that $\sum_{L\in\mathcal L} |L|\leq |I|+2|I(J)|\leq 13|I|$.
\item Suppose now that  $|I|<\frac{|I(J)|}{6}$, then it is easy to see that $w(I\cap I(J)^{c})=0$. Therefore
$$
\frac{w_{k}(I)}{|I|}= \frac{w_{k}(I\cap I(J))}{|I|}=w_{k}(x)\frac{|I\cap I(J)|}{|I|}\leq w_{k}(x).
$$
\end{enumerate}
The proof of \eqref{e.contmax} is concluded. 


\section{One weight case: Proof of \ref{t.maint2}}

First of all, for a weight $w$ that is compactly supported, we will understand $L^p(w)$ is the set of functions that are supported on the support of $w$ and satisfy the corresponding $L^{p}$ bounds.
Next we prove that there exists a weight $u$, for which $H$ does not map $L^p(u)$ into $L^p(u)$ but the Hardy-Littlewood maximal operator does.

\subsection{Unboundedness of the Hilbert transform}

The main result of this subsection is encoded in the following proposition. 

\begin{proposition}\label{p.unbddH1}
For each sufficiently large integer $k>0$, there exists a nontrivial weight $u_{k}$ and a function $f_{k}$, $f_{k}\in L^{p}(u_{k})$ such that
\begin{equation*}
\|H(f_{k})\|_{L^{p}(u_{k})}
\ge \frac{1}{3^{p+1}} k^{p}  \|f_{k}\|_{L^{p}(u_{k})}
\end{equation*}
\end{proposition}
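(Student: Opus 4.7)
The plan is to take $u_k := w_k$ and $f_k := w_k$ (identifying the measure $w_k$ with its Lebesgue density). Then $H(f_k) = H(w_k)$, and Lemma \ref{ch3hestimate} will directly supply the pointwise lower bound $|Hw_k(x)| \geq (k/3)\, w_k(x)$ on each middle third $I(J)^m$ with $J \in \mathbf J$. Raising to the $p$th power and integrating against $u_k\,dx = w_k\,dx$ over $I(J)^m$ gives
\[
\int_{I(J)^m} |Hw_k|^p\, w_k \, dx \;\geq\; \left(\frac{k}{3}\right)^{p}\int_{I(J)^m} w_k^{\,p+1}\, dx.
\]

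The crucial structural observation I need next is that $w_k$ has a \emph{constant} Lebesgue density on each residual interval $I(J)$. This I would verify by unwinding the recursive construction of the $w_k^i$: at stage $i$ the measure is placed uniformly on $K^m \cup I(K^m)$ for each $K \in \mathbf K_i$, and all subsequent refinements $\mathbf K_{i+1}, \mathbf K_{i+2}, \dots$ live inside $K^m$, hence are disjoint from $I(K^m)$, so the density on $I(K^m)$ is never disturbed and survives in the weak limit $w_k$. Since $|I(J)^m| = |I(J)|/3$, this constancy yields
\[
\int_{I(J)^m} w_k^{\,p+1}\, dx \;=\; \frac{1}{3}\int_{I(J)} w_k^{\,p+1}\, dx.
\]
Summing over all $J \in \mathbf J$ and using that $w_k$ is supported on $\bigcup_{J \in \mathbf J} I(J)$, I would then obtain
\[
\|H(f_k)\|_{L^p(u_k)}^{\,p} \;\geq\; \frac{k^p}{3^{p+1}} \int w_k^{\,p+1}\, dx \;=\; \frac{k^p}{3^{p+1}}\, \|f_k\|_{L^p(u_k)}^{\,p},
\]
which (reading the displayed inequality of the proposition on the $p$th-power scale) is the content of the statement. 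Since $k$ may be taken arbitrarily large, this forces $H$ to be unbounded on $L^p(u_k)$.

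The principal obstacle is structural rather than analytical: I must check carefully that the piecewise-constant density of $w_k$ on each $I(J)$ truly persists through the weak limit, which hinges on the disjointness between $I(K^m)$ and the union of all subsequent generations $\bigcup_{i' > i}\bigcup_{K'\in\mathbf K_{i'}} K'$. A secondary bookkeeping point is to verify that $f_k \in L^p(u_k)$, i.e.\ that $\int w_k^{\,p+1}\,dx < \infty$; this should follow from the geometric decay $|I(J)| = 3^{-(i+1)k}$ at generation $i$ combined with the fact that $w_k$ is a finite measure, so that the sum of the contributions over generations converges.
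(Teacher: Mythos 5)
Your overall strategy (apply Lemma \ref{ch3hestimate} pointwise on each $I(J)^m$, use the fact that $w_{k}$ has constant density on each residual interval so that the integral over $I(J)^m$ is one third of the integral over $I(J)$, then sum over $J\in\mathbf J$) is exactly the skeleton of the paper's argument, and your structural observation about the density surviving the weak limit is correct. However, your choice of weight $u_k=w_k$ creates a fatal gap at the very point you flag as "secondary bookkeeping": the quantity $\int w_k^{p+1}\,dx$ is \emph{not} finite, so $f_k=w_k\notin L^p(u_k)$ and your final inequality reads $\infty\ge\frac{k^p}{3^{p+1}}\cdot\infty$, which proves nothing. To see the divergence, note that each step of the construction redistributes the mass of $K\in\mathbf K_i$ uniformly onto $K^m\cup I(K^m)$, whose total length is $|K|\bigl(\tfrac13+3^{-k}\bigr)$; hence the density multiplies by $\tfrac{3}{1+3^{-k+1}}$ at every generation, so on the residual intervals of generation $i$ the density of $w_k$ is comparable to $3^{\,i}$, while the total $w_k$-mass carried by that generation is $3^{-k+1}(1+3^{-k+1})^{-i}$, which decays only subgeometrically for fixed $k$. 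Consequently
\begin{equation*}
\int w_k^{p+1}\,dx \;=\;\sum_{i\ge 1}\sum_{J\in\mathbf J_i} w_k(I(J))\,\Bigl(\tfrac{3}{1+3^{-k+1}}\Bigr)^{ip}
\;=\;3^{-k+1}\sum_{i\ge 1}\frac{3^{ip}}{(1+3^{-k+1})^{i(p+1)}}\;=\;\infty ,
\end{equation*}
since $3^{p}>(1+3^{-k+1})^{p+1}$. Indeed $w_k\notin L^q(dx)$ for any $q\ge 1$ beyond the trivial $q=1$ normalization, precisely because these measures are designed to have wildly growing densities.

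The repair is the paper's choice $u_k=w_k^{1-p}$ (keeping $f_k=w_k$). Then $\|f_k\|_{L^p(u_k)}^p=\int w_k^p\,w_k^{1-p}\,dx=w_k([0,1))=1$, so membership in $L^p(u_k)$ is automatic, and on $I(J)^m$ one has $|Hw_k|^p w_k^{1-p}\ge (k/3)^p w_k$, whence summing $\int_{I(J)^m}w_k=\tfrac13\int_{I(J)}w_k$ over $J$ gives the lower bound $(k/3)^p\tfrac13\,w_k([0,1))$ exactly as you intended. Every other step of your argument then goes through verbatim; only the exponent on $w_k$ in the measure needs to change.
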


\begin{proof}

Let $w_{k}$ as in Section 3, we define $u_{k}=w_{k}^{1-p}$ and $f_{k}=w_{k}$. Using the estimate \eqref{e.hestimate}, we can conclude 

\begin{align*}
\int \left | H(w_{k}) \right|^{p}w_{k}^{1-p}(x) dx =& \sum_{J\in \mathbf J}\int_{I(J)}\left | H(w_{k}) \right|^{p}w_{k}^{1-p}(x) dx\\
\geq &\sum_{J\in \mathbf J}\int_{I(J)^{m}}\left | H(w_{k}) \right|^{p}w_{k}^{1-p}(x) dx\\
\geq & \sum_{J\in \mathbf J}\left(\frac{k}{3}\right)^{p}\int_{I(J)^{m}} w_{k}(x)dx\\
\geq & \left(\frac{k}{3}\right)^{p}\frac{1}{3} w_{k}([0,1)) = \left(\frac{k}{3}\right)^{p}\frac{1}{3}\norm w_{k}. L^{p}(u_{k}).^{p}
\end{align*}

\end{proof}

The proof of unboundedness of the Hilbert transform as stated in Theorem \ref{t.maint2} will require a gliding hump argument. We line out the main ideas here. Consider the weight $w=\sum_{k=1}^{\infty}w_{k}(x-3^{k})$ and the function $g=\sum_{k=1}^{\infty}\frac{1}{k^{\epsilon}}1_{[3^{k}, 3^{k}+1)}$ with $1/p<\epsilon<1$. Then consider $u=w^{1-p}$ and $f=gw$. It is easy to see that $f\in L^{p}(u)$. We claim
\begin{equation}
\label{e.gliding}
\norm Hf.L^{p}(u).=\infty
\end{equation}

\begin{proof}[Proof of \eqref{e.gliding}]
\begin{eqnarray*}
\norm Hf.L^{p}(u).^{p} &= &\sum_{k}\int_{[3^{k},3^{k}+1)}\left|\frac{1}{k^{\epsilon}}H(w_{k}(\cdot-3^k))+ \sum_{n: n\neq k}\frac{1}{n^{\epsilon}}H(w_{n}(\cdot-3^n))\right|^{p}(x)w_{k}^{1-p}(x-3^k)dx\\
 & \geq & \frac{1}{2}\sum_{k}\int_{[3^{k},3^{k}+1)}\left|\frac{1}{k^{\epsilon}}H(w_{k}(\cdot-3^k))(x)\right|^{p}w_{k}^{1-p}(x-3^k)dx \\
 &=&\frac{1}{2} \sum_{k}\int_{[0,1)}\left|\frac{1}{k^{\epsilon}}H(w_{k})(x)\right|^{p}w_{k}^{1-p}(x)dx\\
& \geq & \frac{1}{3^{p+1}}\sum_{k} \left(\frac{k}{k^{\epsilon}}\right)^{p}=\infty,
\end{eqnarray*}
where we have used that $H$ is translation invariant, the estimate from Proposition \ref{p.unbddH1} and the fact that $\epsilon<1$.
\end{proof}

\subsection{Boundedness of the Maximal function}

In this subsection we will consider boundedness of the Maximal function for the weight $u=w^{1-p}$ where $w=\sum_{k\geq 1}w_{k}(\cdot-3^k)$ considered in the previous subsection. We first need to establish the following proposition.

\begin{proposition} \label{maxbounded1}
Let $1 < p < \infty$. For $M$, the Hardy-Littlewood maximal operator, and $u_{k}$ the weights considered in Proposition \ref{p.unbddH1}, we have
\begin{equation}\label{e.maxbdd1}
M:  L^p \left(u_{k} \right) \mapsto L^p\left(u_{k} \right),
\end{equation}
with norm independent of $k$.
\end{proposition}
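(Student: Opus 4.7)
The plan is to invoke Sawyer's two-weight testing criterion (Theorem \ref{sawyer}) with both weights equal to $u_k$, thereby reducing the boundedness claim to a single pointwise inequality already recorded in Section 3.

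First I identify the dual weight. With $v = w = u_k = w_k^{1-p}$, one has
$$\sigma = v^{1-p'} = w_k^{(1-p)(1-p')} = w_k,$$
since $(1-p)(1-p') = 1 - p - p' + pp' = 1$ by the duality identity $pp' = p+p'$. Interpreting $u_k$ as supported on $\textup{supp}(w_k)$ (consistent with the convention $\sigma = v^{1-p'}1_{\textup{supp}(v)}$ from \eqref{e.Sawyer}), Theorem \ref{sawyer} reduces the claim to the testing inequality
$$\int_Q \bigl|M(w_k 1_Q)\bigr|^p w_k^{1-p}(x)\, dx \leq C\, w_k(Q)$$
uniformly over intervals $Q$, with $C$ independent of $k$.

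Second, I invoke \eqref{e.contmax}: on $\textup{supp}(w_k) = \bigcup_{J\in \mathbf{J}} I(J)$ we have the pointwise bound $Mw_k(x) \leq 13\, w_k(x)$. Since $M(w_k 1_Q) \leq Mw_k$ trivially, and $w_k^{1-p}$ contributes only on $\textup{supp}(w_k)$, substituting yields
$$\int_Q \bigl|M(w_k 1_Q)\bigr|^p w_k^{1-p}\, dx \leq 13^p \int_Q w_k^p \cdot w_k^{1-p}\, dx = 13^p\, w_k(Q),$$
establishing the testing condition with constant $13^p$, independent of $k$.

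The central observation is that the pointwise control \eqref{e.contmax} is precisely what Sawyer's testing condition on the pair $(u_k, u_k)$ demands, so there is no genuine obstacle beyond recognizing this match; all the work was already done in constructing $w_k$. Tellingly, the corresponding step for the Hilbert transform would require an analogue of \eqref{e.contmax} for $|Hw_k|$, which is exactly what \eqref{e.hestimate} rules out; this is the structural reason Proposition \ref{p.unbddH1} and Proposition \ref{maxbounded1} coexist.
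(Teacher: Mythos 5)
Your proof is correct and follows essentially the same route as the paper's: both reduce the claim to Sawyer's testing condition for the pair $(u_k,u_k)$, identify the dual weight as $w_k$, and then apply the pointwise bound \eqref{e.contmax} together with $M(w_k 1_Q)\leq Mw_k$ to obtain the testing constant $13^p$, independent of $k$. Your explicit verification that $(1-p)(1-p')=1$ and the closing remark contrasting \eqref{e.contmax} with \eqref{e.hestimate} are welcome additions but do not change the argument.
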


\begin{proof}
Let $u_{k}$ be the weight from Proposition \ref{p.unbddH1}. Using Sawyer's characterization, it is enough to check the testing condition \eqref{e.testingM} for any interval $Q$. The estimate for the maximal function \eqref{e.contmax} allows us to prove the desired testing condition.

\begin{align*}
\int_{Q} \left |M(w_{k} 1_{Q})\right |^{p}(x)w_{k}^{1-p}(x)dx&= \sum_{J\in \mathbf J}\int_{I(J)\cap Q}\left |M(w_{k} 1_{Q})\right |^{p} (x)w_{k}^{1-p}(x)dx\\
&\leq 13^{p} \sum_{J\in \mathbf J}\int_{I(J)\cap Q} w_{k}(x)dx\\
& \leq 13^{p} w_{k}(Q).
\end{align*}
which implies the desired conclusion.
\end{proof}

In order to conclude the proof of Theorem \ref{t.maint2}, we need to prove $M(w\cdot): \, L^{p}(w)\mapsto L^{p}(w^{1-p})$ for $w=\sum_{k=1}^{\infty}w_{k}(x-3^{k})$. Once again we use the characterization provided by Sawyer to reduce the problem to checking \eqref{e.testingM} for $Q$ a fixed interval.

\begin{eqnarray*}
\norm 1_{Q}M(w1_{Q}).L^{p}(w^{1-p}).^{p}&=& \sum_{k\geq 1}\int_{Q\cap[3^{k},3^{k}+1)}|M(w1_{Q})|^{p}(x)w_{k}^{1-p}(x-3^{k})dx\\
&\leq &\sum_{k\geq 1}\int_{(Q-3^{k})\cap[0,1)}|M(w)|^{p}(x+3^{k})w_{k}^{1-p}(x)dx\\
&\leq & C\sum_{k\geq 1}\int_{(Q-3^{k})\cap[0,1)}|M(w_{k})|^{p}(x)w_{k}^{1-p}(x)dx\\
&\leq & 13^{p} \sum_{k\geq 1}w((Q-3^{k})\cap[0,1))\leq 13^{p} w(Q).
\end{eqnarray*}

In the previous inequalities, we have used Proposition \ref{maxbounded1} and the fact that $Mw(x+3^{k})\approx Mw_{k}(x)$ when $x\in \text{supp}(w_{k})\subset [0,1)$.
This concludes the proof of Theorem \ref{t.maint2}.


\section{Sufficiency in the two weight case: Proof of Theorem \ref{t.main}}

In this section we consider the weights $w_{k}$ from Section 3 and $v_{k}=\left(\frac{Mw_{k}}{w_{k}}\right)^{p}w_{k}$.We will also need the dual weight of $v_{k}$, $\sigma_k=v_{k}^{1-p'}=\frac{w_{k}}{(Mw_{k})^{p'}}$. The proof of Theorem \ref{t.main} is divided in two subsections, in the first one we prove unboundedness of the Hilbert transform, in the second one, we include the proof of boundedness of the Maximal operator in both directions. In order to show a complete proof of Theorem \ref{t.main} one would need to use gliding hump arguments analogous to the ones used in the previous section, we leave the details to the reader. 

\subsection{Unboundedness of the Hilbert Transform}

We describe the main result of this subsection in the proposition below.

\begin{proposition}\label{p.unbddH2}
Let $1<p<\infty$ and let $p'$ be its dual exponent. For each sufficiently large integer $k>0$, there exist nontrivial weights $w_{k}$ and $\sigma_{k}$ on the real line such that
\begin{equation}\label{e.unbddH2}
\|H(w_{k}1_{[0,1)})\|_{L^{p'}(\sigma_{k})}
\ge k^{p'}\frac{1}{3}\frac{1}{21^{p'}} \|1_{[0,1)}\|_{L^{p'}(w_{k})}
\end{equation}
\end{proposition}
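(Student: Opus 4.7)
The plan is to dualize the argument of Proposition \ref{p.unbddH1}, replacing the one-weight multiplier $w_k^{1-p}$ by the two-weight dual weight $\sigma_k = w_k/(Mw_k)^{p'}$. Since $\mathrm{supp}(w_k) \subset [0,1)$, we have $w_k 1_{[0,1)} = w_k$ and $\sigma_k$ is supported on $\bigcup_{J \in \mathbf{J}} I(J)$, so I would begin by writing
\begin{equation*}
\|H(w_k 1_{[0,1)})\|_{L^{p'}(\sigma_k)}^{p'} = \sum_{J \in \mathbf{J}} \int_{I(J)} |Hw_k(x)|^{p'} \frac{w_k(x)}{(Mw_k(x))^{p'}}\, dx,
\end{equation*}
and then restrict each piece to the middle triadic child $I(J)^m$ of $I(J)$, where both pointwise estimates available from Section 3 hold simultaneously.

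On $I(J)^m$, I would invoke Lemma \ref{ch3hestimate}, giving $|Hw_k(x)| \ge (k/3) w_k(x)$, together with the refinement of \eqref{e.contmax} proved in \cite{1011.1767} valid on $I(J)^m$, which replaces the constant $13$ by the smaller constant $7$ and so yields $Mw_k(x) \le 7 w_k(x)$; this sharper constant is precisely what produces the $21 = 3 \cdot 7$ in the target bound. Plugging both into the integrand causes the $w_k$ factors to telescope:
\begin{equation*}
|Hw_k(x)|^{p'}\, \frac{w_k(x)}{(Mw_k(x))^{p'}} \;\geq\; \left(\frac{k}{3}\right)^{p'} \frac{w_k(x)^{p'+1}}{(7 w_k(x))^{p'}} \;=\; \left(\frac{k}{21}\right)^{p'} w_k(x), \qquad x \in I(J)^m.
\end{equation*}

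To finish, I would use that $w_k$ is uniformly distributed on each residual interval $I(J)$ and $|I(J)^m| = |I(J)|/3$, so $w_k(I(J)^m) = w_k(I(J))/3$, and $\sum_{J \in \mathbf{J}} w_k(I(J)) = w_k([0,1)) = 1$. Summing the pointwise estimate then gives
\begin{equation*}
\|H(w_k 1_{[0,1)})\|_{L^{p'}(\sigma_k)}^{p'} \;\geq\; \frac{1}{3}\left(\frac{k}{21}\right)^{p'} \;=\; \frac{k^{p'}}{3 \cdot 21^{p'}},
\end{equation*}
while $\|1_{[0,1)}\|_{L^{p'}(w_k)}^{p'} = w_k([0,1)) = 1$, matching the stated inequality (read with the LHS in its $p'$-th power form, parallel to Proposition \ref{p.unbddH1}).

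There is no substantive obstacle in this argument: all the hard work has already been done in the construction of $w_k$ and the proof of Lemma \ref{ch3hestimate} in Section 3. The only point requiring care is to remember to use the sharper $I(J)^m$-version of the pointwise maximal bound (constant $7$ rather than $13$) so that the target constant $21$ rather than $39$ is achieved.
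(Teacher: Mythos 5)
Your argument is correct and follows essentially the same route as the paper: decompose the $\sigma_k$-integral over the residual intervals $I(J)$, restrict to the middle children $I(J)^m$ where Lemma \ref{ch3hestimate} and the sharper pointwise bound $Mw_k\le 7w_k$ from \cite{1011.1767} both hold, and sum using $w_k(I(J)^m)=w_k(I(J))/3$. Your explicit identification of the constant $7$ (hence $21=3\cdot 7$) and your remark that the stated inequality should be read with the left side raised to the $p'$-th power are both consistent with how the paper's own proof proceeds.
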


\begin{remark}
To prove unboundedness of the Hilbert transform it is enough to provide a counterexample to the dual inequality \ref{e.dual}. Proposition \ref{p.unbddH2} provides the desired result.
\end{remark}

\begin{remark}
The case $p=2$ was already proven by Reguera and Thiele in \cite{1011.1767}. The proposition extends the result for every $1<p<\infty$.
\end{remark}

\begin{proof}
Let us choose the weight $w_{k}$ as in Section 3 and $\sigma_{k}=\frac{w_{k}}{(Mw_{k})^{p'}}$ . To complete this proof we will use the estimates obtained in Lemma \ref{ch3hestimate}. 

\begin{align*}
\int_{[0,1)}\left | H(w_{k}1_{[0,1)}) \right|^{p'}\sigma_{k} dx =& \sum_{J\in \mathbf J}\int_{I(J)}\left | H(w_{k}1_{[0,1)}) \right|^{p'}\sigma_{k} dx\\
\geq & \sum_{J\in \mathbf J}\int_{I(J)^{m}}\left | H(w_{k}1_{[0,1)}) \right|^{p'}\sigma_{k} dx\\
\geq & \sum_{J\in \mathbf J}\frac{k^{p'}}{21^{p'}}\int_{I(J)^{m}} w_{k}(x)dx\\
\geq & \frac{k^{p'}}{21^{p'}} \frac{1}{3}w_{k}([0,1)).
\end{align*}

\end{proof}


\subsection{Boundedness of the Maximal function}
We devote this subsection to proving the following Proposition:
\begin{proposition} \label{maxbounded}
If $M$ is the Hardy-Littlewood maximal operator, $1 < p < \infty$ and $w$ is a weight, then we have
\begin{eqnarray}
\label{e.pmax} M: &  L^p \left(v \right) \mapsto L^p(w)  \\
\label{e.christsawyer} M:&  L^{p^{\prime}}(w^{1- p^{\prime}}) \mapsto L^{p^{\prime}} ( \sigma ),
\end{eqnarray}
with $v = \left(\frac{M w}{w} \right)^{p} w$ and $\sigma = v^{1- p^{\prime}}$. 
\end{proposition}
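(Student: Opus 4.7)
My plan is to prove the two mapping properties by distinct routes. The bound \eqref{e.christsawyer} is almost immediate from Sawyer's testing condition (Theorem \ref{sawyer}), once one notices the algebraic identity $\sigma (Mw)^{p'} = w$ built into the definition of $\sigma$. The bound \eqref{e.pmax} is more delicate: rather than trying to verify Sawyer's testing condition directly, I will prove the operator bound via a change of variables that reduces matters to the boundedness of the $w$-weighted Hardy--Littlewood maximal operator on $L^{p}(w)$.

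For \eqref{e.christsawyer}, the dual weight of $w^{1-p'}$ is $(w^{1-p'})^{1-p} = w^{(1-p')(1-p)} = w$, so by Theorem \ref{sawyer} it suffices to verify the testing inequality $\int_{Q} M(w 1_{Q})^{p'} \sigma\,dx \le C\, w(Q)$. Since $M(w 1_{Q}) \le Mw$ on $Q$ and $\sigma (Mw)^{p'} = w$ by definition,
\[
\int_{Q} M(w 1_{Q})^{p'} \sigma\,dx \;\le\; \int_{Q} (Mw)^{p'} \sigma\,dx \;=\; \int_{Q} w\,dx \;=\; w(Q),
\]
and the constant is $C=1$.

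For \eqref{e.pmax}, I would substitute $\phi := |f|\, Mw/w$ (with the convention $0/0=0$), so that $f\in L^{p}(v)$ corresponds to $\phi \in L^{p}(w)$ with equal norms: $\int \phi^{p} w\,dx = \int |f|^{p} v\,dx$. The key pointwise bound is $Mf(x)\le M_{w}\phi(x)$, where
\[
M_{w}\phi(x) := \sup_{I \ni x}\, \frac{1}{w(I)} \int_{I} |\phi|\,dw
\]
is the $w$-weighted maximal operator. To see this, fix a cube $I \ni x$; for each $y\in I$ one has $Mw(y) \ge w(I)/|I|$, so
\[
\frac{1}{|I|}\int_{I} |f|\,dy \;=\; \frac{1}{|I|}\int_{I} \frac{\phi\, w}{Mw}\,dy \;\le\; \frac{1}{|I|}\cdot\frac{|I|}{w(I)}\int_{I} \phi\, w\,dy \;=\; \frac{1}{w(I)}\int_{I}\phi\, w\,dy,
\]
and taking the supremum over $I \ni x$ yields the pointwise inequality. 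Since the centered and uncentered weighted maximal operators are of weak type $(1,1)$ with respect to an arbitrary locally finite Borel measure on $\mathbb{R}$ (via the Besicovitch covering theorem), $M_{w}$ is bounded on $L^{p}(w)$ for $1<p<\infty$ with constant depending only on $p$, and therefore
\[
\int (Mf)^{p} w\,dx \;\le\; \int (M_{w}\phi)^{p} w\,dx \;\lesssim\; \int \phi^{p} w\,dx \;=\; \int |f|^{p} v\,dx.
\]

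The main obstacle is \eqref{e.pmax}. A naive attempt to verify Sawyer's testing condition for it would use the easy pointwise estimate $\sigma(I)/|I| \le (|I|/w(I))^{p'-1}$ (coming from $Mw(y) \ge w(I)/|I|$ on $I$) together with a Calder\'on--Zygmund decomposition of $\sigma 1_{Q}$; this yields only the weak bound $w(\{M(\sigma 1_{Q})>\lambda\}) \lesssim \sigma(Q)/\lambda^{p}$, whose Cavalieri integral is logarithmically divergent in $\lambda$. The change of variables above circumvents this by transferring the Lebesgue-averaged maximal function into one averaged against the measure $w\,dx$, for which a genuine strong-type bound is available for arbitrary Borel measures.
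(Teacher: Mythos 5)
Your proof is correct, but it takes a genuinely different route from the paper's on both halves. For \eqref{e.christsawyer} the paper does not use your two-line observation that $M(w1_{Q})\le Mw$ together with the identity $(Mw)^{p'}\sigma=w$; instead it passes to the self-dual form $M(\cdot\, w):L^{p'}(w)\to L^{p'}(\sigma)$, dominates $M$ by dyadic maximal operators over the standard and Christ-shifted grids (Lemma \ref{christ}), linearizes, and verifies Sawyer's testing condition for the linearization by splitting the sum into $I\subset Q$ and $Q\subset I$ and using $\sigma(E(I)\cap Q)\le w(E(I)\cap Q)\min\bigl\{(|I|/w(I))^{p'},(|Q|/w(Q))^{p'}\bigr\}$ --- the same underlying estimate $Mw\ge w(I)/|I|$ on $I$ that you exploit, but deployed after a much longer reduction. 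Your version short-circuits all of this and even yields testing constant $1$ where the paper gets $3$. For \eqref{e.pmax} the paper simply cites the extrapolation theorem of Cruz-Uribe and P\'erez \cite{MR1761362}, whereas you give a self-contained argument via the substitution $\phi=|f|\,Mw/w$, the pointwise domination $Mf\le M_{w}\phi$, and the universal $L^{p}(w)$-boundedness of the uncentered $w$-weighted maximal operator on $\mathbb{R}$ (weak $(1,1)$ with absolute constant in one dimension plus interpolation). Both of your arguments are sound; the only point requiring a word of care is the set $\{w=0<Mw\}$, where $v=+\infty$ forces $f=0$ a.e., so the identity $|f|=\phi w/Mw$ is legitimate. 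Your approach has the advantage of being elementary and quantitative, while the paper's route is shorter on the page for \eqref{e.pmax} but imports a nontrivial external result, and is considerably more involved for \eqref{e.christsawyer}.
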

Prior to proving Proposition \ref{maxbounded}, we introduce a lemma and some attendant notation. Let $\mathcal D$ be the usual dyadic grid in $\mathbb R$, namely 
$\mathcal D = \{ \left.\left [ 2^{j}m, 2^{j}(m+1)\right.\right ),\,\,\,m, j \in \mathbb Z \}$ and let $\mathcal D_{{\textrm{shift}}}$ denote the shifted dyadic grid of Michael Christ, i.e.
\[ \mathcal D_{{\textrm{shift}}} = \{2^j \left( [n, n+1) + (-1)^j 3^{-1} \right): n, j \in \mathbb{Z} \}. \]
For $f \in L^1_{\textrm{loc}}(\mathbb{R})$, we define 
\begin{eqnarray*}
 \mathcal Mf(x) = \displaystyle\sup_{ I \in \mathcal D} \frac{1_{I}}{| I |} \displaystyle\int_{I} |{f(y)}| dy. 
\end{eqnarray*}
Equivalently we can define $\mathcal M_{\textrm{shift}}f$, where the supremum is taken over intervals in $D_{\textrm{shift}}$.
Then we have, 
\begin{lemma}\label{christ} For any finite interval $I$, there exists an interval $I_d \subset \mathcal D \cup \mathcal D_{\textrm{shift}}$ such that $I \subset I_d$ and 
$| I | \approx | I_d |$. As a consequence, for a function $f \in L^1_{\textrm{loc}}(\mathbb{R})$, the following inequality holds:
\begin{align}
M f(x) \lesssim \mathcal Mf(x) + \mathcal M_{\textrm{shift}}f(x) .\label{maxdom}
\end{align}
\end{lemma}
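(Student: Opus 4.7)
The plan is to deploy M.\ Christ's one-third trick. Given a finite interval $I$ of length $\ell$, I will fix the unique integer $j$ with $3\ell\le 2^{j}<6\ell$, so that $\ell\le 2^{j}/3$ while $2^{j}$ is comparable to $\ell$. The argument then splits along a geometric dichotomy: either $I$ is already contained in a single element of $\mathcal D$ of length $2^{j}$, in which case that dyadic interval is $I_{d}$ and we are done, or $I$ straddles a dyadic breakpoint of the form $2^{j}m$, in which case I will show that the appropriate element of $\mathcal D_{\textrm{shift}}$ at scale $2^{j}$ contains $I$.

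For the straddling case, the plan is to exploit that the breakpoints of $\mathcal D_{\textrm{shift}}$ at scale $2^{j}$ sit at $2^{j}(n+(-1)^{j}/3)$, i.e.\ offset by exactly $2^{j}/3$ from the dyadic breakpoints $2^{j}n$. Therefore the element of $\mathcal D_{\textrm{shift}}$ of length $2^{j}$ covering the point $2^{j}m$ extends a distance $2^{j}/3$ on one side of $2^{j}m$ and $2\cdot 2^{j}/3$ on the other. Since $\ell\le 2^{j}/3$ and $I\ni 2^{j}m$, the interval $I$ cannot reach past the nearer shifted breakpoint, so $I\subset I_{d}\in\mathcal D_{\textrm{shift}}$. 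Either way, $|I_{d}|=2^{j}\le 6|I|$, which gives the desired comparability $|I|\approx|I_{d}|$.

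The inequality \eqref{maxdom} will then follow by a routine averaging argument: for any $x\in\mathbb R$ and any interval $I\ni x$, I enclose $I$ in $I_{d}\in\mathcal D\cup\mathcal D_{\textrm{shift}}$ with $|I_{d}|\le 6|I|$ and observe
\[
\frac{1}{|I|}\int_{I}|f(y)|\,dy\le \frac{|I_{d}|}{|I|}\cdot\frac{1}{|I_{d}|}\int_{I_{d}}|f(y)|\,dy\le 6\cdot\frac{\mathbf 1_{I_{d}}(x)}{|I_{d}|}\int_{I_{d}}|f(y)|\,dy.
\]
Taking the supremum over $I\ni x$ and noting that a supremum over $\mathcal D\cup\mathcal D_{\textrm{shift}}$ is bounded by the sum of the two separate suprema will yield $Mf(x)\lesssim\mathcal Mf(x)+\mathcal M_{\textrm{shift}}f(x)$.

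The only substantive obstacle is the geometric case analysis: calibrating the scale $j$ so that the $\pm 1/3$ shift exactly accommodates the intervals that escape $\mathcal D$. Once the numerics are pinned down, the pointwise maximal inequality falls out of a single line of averaging.
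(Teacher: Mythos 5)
Your argument is correct and is exactly the standard ``one‑third trick'' that the paper itself does not spell out (it simply defers to Christ \cite{Christcubes}): the calibration $3\ell\le 2^{j}<6\ell$, the observation that the breakpoints of $\mathcal D_{\textrm{shift}}$ at scale $2^{j}$ are offset by $2^{j}/3$ from those of $\mathcal D$, and the one-line averaging step together give the lemma with the explicit constant $6$. The only cosmetic point worth tidying is the half-open endpoint convention: choosing $j$ with $3\ell<2^{j}$ strictly (or arguing with closures) avoids the degenerate case where $I$ reaches exactly the nearer shifted breakpoint, which the grids, being half-open, do not quite cover.
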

For a proof of this lemma we refer the reader to \cite{Christcubes}.

With Lemma \ref{christ} in hand, we now proceed to the proof of Proposition \ref{maxbounded}. 
\begin{proof}
\indent The proof of \eqref{e.pmax} follows from an extrapolation argument of D. Cruz-Uribe and C. P\'{e}rez \cite{MR1761362}, so we only need to consider \eqref{e.christsawyer}. Instead of proving $\eqref{e.christsawyer}$ directly, by \eqref{e.Sawyer}, we may verify the following equivalent expression
\begin{eqnarray} M (\cdot w) : L^{p^{\prime}}(w) \rightarrow L^{p^{\prime}}(\sigma) \label{j.dual}, \end{eqnarray}
holds. Consideration of Lemma \ref{christ} implies it is sufficient to demonstrate \eqref{j.dual} for an arbitrary dyadic linearization of the maximal function, i.e. we need to show
\begin{eqnarray}
L(\cdot w):& L^{p^{\prime}}(w) \mapsto L^{p^{\prime}}(\sigma) \label{linear}
\end{eqnarray}
with $L$ a linearization of the maximal function. To this end, let 
\begin{align}
L(f w)(x) = \displaystyle\sum_{I \in \mathcal G} \mathbb{E}_{I}(fw) {1}_{E(I)}(x)
\end{align} where $\mathcal G = \mathcal D \ {\textrm{ or }} \ \mathcal D_{\textrm{shift}}$ and each $E(I)$ satisfies $E(I) \subset I$ and $E(I) \cap E(\tilde{I}) = \emptyset$ if $I \neq \tilde{I}$. Before doing any computations, we invoke Theorem \ref{sawyer} which reduces proving \eqref{linear} to showing 
\begin{eqnarray*}
\norme{{1}_{Q} L( {1}_{Q} w)}_{L^{p^{\prime}}(\sigma)} \lesssim w(Q)^{\frac{1}{p^{\prime}}}
\end{eqnarray*}
for $Q$ a dyadic subinterval of $\mathbb{R}$. Now we fix an interval $Q$ and notice that since $E(I) \cap E(Q) = \emptyset $ for $I \neq Q$,
\begin{eqnarray*}
\norme{L({1}_{Q} w)}^{p^{\prime}}_{L^{p^{\prime}}(\sigma)} &=&
\nint_{Q} L({1}_{Q} w)^{p^{\prime}}(x) \sigma(x) \\
&=& \nint_{Q} \left(\displaystyle\sum_{I \in \mathcal G} \mathbb{E}_{I}( {1}_Q w) {1}_{E(I)}(x) \right)^{p^{\prime}} \sigma(x)   \\
&=& \displaystyle\sum_{I \in \mathcal G} \mathbb{E}_{I}({1}_Q w)^{p^{\prime}}  \sigma(E(I) \cap Q) \\
&=& \displaystyle\sum_{I \in \mathcal G} \left( \frac{w(I \cap Q)}{|I |} \right)^{p^{\prime}} \cdot \sigma(E(I) \cap Q)\\
&=& \displaystyle\sum_{\substack{I \in \mathcal G\\ I\subset Q} } \left( \frac{w(I \cap Q)}{|I |} \right)^{p^{\prime}} \cdot \sigma(E(I) \cap Q) + \displaystyle\sum_{\substack{I \in \mathcal G\\ Q\subset I} } \left( \frac{w(I \cap Q)}{|I |} \right)^{p^{\prime}} \cdot \sigma(E(I) \cap Q).
\end{eqnarray*}
As $\sigma = v^{1- p^{\prime}} = \left( \frac{1}{M w (x)} \right)^{p^{\prime}} w(x)$, 
\begin{eqnarray*}
\sigma(E(I) \cap Q) &\leq& w(E(I) \cap Q) \cdot \min\left \{ \left( \frac{|I |}{w( I )} \right)^{p^{\prime}}, \,\left( \frac{|Q |}{w( Q )} \right)^{p^{\prime}} \right \} .  
\end{eqnarray*}

Consequently,
\begin{eqnarray*}
 \displaystyle\sum_{\substack{I \in \mathcal G\\ I\subset Q} } \left( \frac{w(I \cap Q)}{|I|} \right)^{p^{\prime}} \cdot \sigma(E(I) \cap Q)&\leq&
 \displaystyle\sum_{\substack{I \in \mathcal G\\ I\subset Q} } \left( \frac{w (I )}{| I |} \right)^{p^{\prime}} \left(\frac{ |I | }{w(I )} \right)^{ p^{\prime}} w(E(I) \cap Q) \\
 &=& \displaystyle\sum_{\substack{I \in \mathcal G\\ I\subset Q} } w(E(I) \cap Q) \\
 &\leq& w(Q),
\end{eqnarray*}
and
\begin{eqnarray*}
 \displaystyle\sum_{\substack{I \in \mathcal G\\ Q\subset I} } \left( \frac{w(I \cap Q)}{|I|} \right)^{p^{\prime}} \cdot \sigma(E(I) \cap Q)&\leq&
 \displaystyle\sum_{\substack{I \in \mathcal G\\ Q\subset I} } \left( \frac{w (Q )}{| I |} \right)^{p^{\prime}} \left(\frac{ |Q| }{w(Q )} \right)^{ p^{\prime}} w(E(I) \cap Q) \\
 &\leq& w(Q)|Q|^{p'} \sum_{\substack{I \in \mathcal G\\ Q\subset I} } \frac{1}{|I|^{p'}} \\
 &\leq& 2w(Q),
\end{eqnarray*}
Thus,
\begin{eqnarray*}
\nint_{Q} L({1}_{Q} w)^{p^{\prime}}(x) \sigma(x) &\leq& 3w(Q)
\end{eqnarray*}
which implies the desired result and completes the proof of Proposition \ref{maxbounded}.
\end{proof}

Now, by taking Proposition \ref{p.unbddH2} and Proposition \ref{maxbounded} in concert, and by considering a gliding hump argument like the one discussed in Section 4 we immediately obtain Theorem \ref{t.main}.  

\section{Necessity in the two weight case: Proof of Theorem \ref{m.c.lsut}}

\indent In this subsection, we emphasize the disparity between the Hilbert transform and the maximal function by presenting a pair of measures $\lambda$ and $\gamma$ for which the Hilbert transform acts continuously while the maximal function is unbounded. The measures which we will use are due to Lacey, Sawyer, and Uriarte-Tuero \cite{1001.4043} and we begin by briefly describing their construction. In the interest of clarity we introduce $\gamma$ and some attendant notation by describing the Cantor set's construction. We let $I^0_{1} = [0, 1]$ and for $1 \leq r$ we let $\{ I^r_l \}_{l=1}^{2^r}$ denote the $2^r$ closed intervals (ordered left to right) which remain during the $r^{\textrm{th}}$ stage of the Cantor set's construction; in particular, we have $I^1_1 = [0,\frac{1}{3}]$ and $I^1_2 = [\frac{2}{3}, 1]$, $I^2_1 = [ 0 , \frac{1}{9} ]$, $I^2_2 = [ \frac{2}{9}, \frac{1}{3} ]$, $I^2_3 = [\frac{2}{3}, \frac{7}{9}]$, $I^2_4 = [\frac{8}{9}, 1]$ etc. For each $I^r_l$, the corresponding open middle third interval which is removed during the $r+1$ stage of construction will be denoted by $G^r_l = (a^r_l, b^r_l)$; so, we have $G^0_1 = (\frac{1}{3}, \frac{2}{3})$, $G^1_1 = (\frac{1}{9}, \frac{2}{9} )$, $G^1_2 = (\frac{7}{9}, \frac{8}{9})$ etc. Further, we denote the Cantor set by $E = \cap_{r=1}^{\infty} \cup_{j=1}^{2^r} I^r_j$. The measure $\gamma$ is the Cantor measure, the unique probability measure on $[0,1]$ which satisfies $\gamma(I^r_l) = 2^{-r}$ for all $ r \geq 0$ and $1 \leq l \leq 2^r$. \\
\indent At this point, we would like to describe the measure $\lambda$. However, prior to doing so, we introduce a lemma which lists important properties of $H(\gamma)$ discussed in \cite{1001.4043}: 
\begin{lemma} \label{h.inc}
For any $l,r \in \nat$,
\begin{itemize}
\item[\textit{i.}] $H(\gamma)(x)$ is decreasing monotonically on $G^r_l$. 
\item[\textit{ii.}] $H(\gamma)(x)$ approaches infinity as $x$ approaches $a^r_l$.
\item[\textit{iii.}] $H(\gamma)(x)$ approaches negative infinity as $x$ approaches $b^r_l$. 
\end{itemize}
\end{lemma}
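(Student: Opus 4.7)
The plan is to exploit that every $x \in G^r_l$ lies at positive distance from the Cantor set $E = \mathrm{supp}(\gamma)$, so the principal value in the definition of $H(\gamma)(x)$ disappears and
\[
H(\gamma)(x) \;=\; \int \frac{d\gamma(y)}{y-x}
\]
is a real-analytic function of $x$ on the open gap $G^r_l$, with derivatives obtained by differentiating under the integral sign. Part (i) is then immediate: the derivative $\int d\gamma(y)/(y-x)^{2}$ has a definite sign for every $x \in G^r_l$, so $H(\gamma)$ is strictly monotone throughout the gap, and the sign of the monotonicity (matching the one asserted in (i)) is fixed by the convention adopted for the Hilbert kernel.

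Parts (ii) and (iii) rest on the self-similar structure of $\gamma$ at the endpoints. The endpoint $a^r_l$ is the right endpoint of the Cantor subinterval $I^{r+1}_{2l-1}$, on which the restriction of $\gamma$ is an affine rescaling of the full Cantor measure. From this I would read off the one-sided lower bound
\[
\gamma\bigl([a^r_l - t,\,a^r_l]\bigr) \;\gtrsim\; t^{\log_{3}2} \qquad \text{for all sufficiently small } t > 0,
\]
since for $t = 3^{-(r+1+n)}$ the interval $[a^r_l - t, a^r_l]$ contains the rightmost generation-$n$ Cantor subtree inside $I^{r+1}_{2l-1}$, whose $\gamma$-mass is $2^{-(r+1+n)}$; arbitrary small $t$ is then covered by interpolating between consecutive powers of $3$. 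A completely symmetric estimate holds for $\gamma([b^r_l,\,b^r_l + t])$.

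To deduce (ii), fix $\epsilon > 0$ small, set $x = a^r_l + \epsilon \in G^r_l$, and pick $k$ with $3^{-k-1} \leq \epsilon < 3^{-k}$. Split $H(\gamma)(x)$ into the \emph{close} contribution from $y \in [a^r_l - 3^{-k},\,a^r_l]$ and the \emph{far} contribution from the complement. Every $y$ in the close range satisfies $|x-y| \leq 2 \cdot 3^{-k}$ and lies strictly to one fixed side of $x$, so the close part has a definite sign and its absolute value is at least
\[
\frac{\gamma\bigl([a^r_l - 3^{-k},\,a^r_l]\bigr)}{2 \cdot 3^{-k}} \;\gtrsim\; \bigl(\tfrac{3}{2}\bigr)^{k} \;\longrightarrow\; \infty \quad \text{as } \epsilon \to 0^{+}.
\]
The far contribution remains uniformly $O(1)$ as $\epsilon \to 0^{+}$, so both the divergence and its sign are controlled by the close part, and the sign matches the one required in (ii). Part (iii) is handled in precisely the same way, with the self-similar estimate near $b^r_l$ replacing the one at $a^r_l$.

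The only real obstacle is obtaining the self-similar lower bound on $\gamma$ at the endpoints; every other ingredient is elementary bookkeeping about the kernel $1/(y-x)$ together with the sign conventions inherited from the definition of $H$.
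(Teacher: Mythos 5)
First, a point of comparison: the paper itself offers no proof of this lemma --- it simply attributes the listed properties of $H(\gamma)$ to \cite{1001.4043} --- so your argument has to stand on its own. The strategy you chose (absolute convergence of the integral at positive distance from $E$, differentiation under the integral sign for strict monotonicity, and a near/far splitting against the self-similar lower bound $\gamma([a^r_l-t,a^r_l])\gtrsim t^{\log_3 2}$ for the blow-up) is the natural and essentially standard one. One cosmetic remark before the substantive one: with the paper's own convention \eqref{e.hilbertt} the kernel is $1/(y-x)$, so on each gap $\frac{d}{dx}H\gamma(x)=\int (y-x)^{-2}\,d\gamma(y)>0$ and the one-sided limits at $a^r_l$ and $b^r_l$ are $-\infty$ and $+\infty$; the signs asserted in the lemma correspond to the kernel $1/(x-y)$ used in \cite{1001.4043}. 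You cannot simultaneously ``fix the sign by the convention'' and recover all three signs of the statement under \eqref{e.hilbertt}. This is an inconsistency of the paper rather than of your argument, and it is harmless, since all that is used downstream is the existence of a unique zero $\zeta^r_l$ in each gap, which follows from strict monotonicity plus infinite limits of opposite sign at the two endpoints under either convention.

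The substantive gap is your assertion that the far contribution is uniformly $O(1)$: as you have defined it (everything outside $[a^r_l-3^{-k},a^r_l]$), it is not. Decompose the mass to the left of $a^r_l$ into triadic shells $[a^r_l-3^{-j},a^r_l-3^{-j-1}]$ for $r+1\le j\le k-1$. The $j$-th shell carries $\gamma$-mass at most $2^{-j}$ and lies at distance at least $3^{-j-1}$ from $x$, so it contributes at most $3\cdot(3/2)^{j}$ in absolute value, and the sum over $j\le k-1$ is comparable to $(3/2)^{k}$ --- the same order as your close part, not $O(1)$. The argument survives because this same-side tail has the \emph{same sign} as the close part (all of it sits on one side of $x$), so it reinforces rather than cancels the divergence; the only piece that must be, and is, uniformly bounded is the contribution from across the gap, namely from $y\ge b^r_l$, which lies at distance at least $3^{-(r+1)}-\epsilon\ge\tfrac12\,3^{-(r+1)}$ from $x$ for small $\epsilon$ and hence contributes $O_r(1)$. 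So the correct dichotomy is by which side of $x$ the mass lies on, not by distance to $x$: the one-sided part is single-signed with magnitude at least that of your close part, $\gtrsim(3/2)^k$, and the opposite-sided part is $O_r(1)$. With that regrouping (and the sign caveat above) your proof is complete.
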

By Lemma \ref{h.inc}, for each $ r \in \nat$ and $1 \leq l \leq 2^r$, there is a point $\zeta^r_l \in G^r_l$ which satisfies $H(\gamma)(\zeta^r_l) = 0$. We define 
\begin{eqnarray*}
\lambda(x) = \displaystyle\sum_{r =0}^{\infty} \displaystyle\sum_{l=1}^{2^r} \delta_{\zeta^r_l}(x) p^r_l 
\end{eqnarray*}
where $p^r_l = \left( \frac{2}{9} \right)^{r}$ for $r \in \nat$ and $1 \leq l \leq 2^r$. With $\lambda$ and $\gamma$ defined, we may now proceed to the proof of Theorem \ref{m.c.lsut}.  

\begin{proof}[Proof of Theorem \ref{m.c.lsut}]
\indent The verification of $\ref{lsut.hilbert}$ is shown in \cite{1001.4043} so we need only consider $\ref{cantor.max}$. We will show for $r \in \nat$ and $l = 1$  that $\displaystyle\int_{I^r_l} M(1_{I^r_l} \gamma)(x)^2 d \lambda$ is unbounded. Fix $ r \in \nat$ and define a collection of sets $\{ \mathcal G_t \}_{t \in \nat}$ in the following way:
$\mathcal G_0 = G^r_1$ and $\mathcal G_t = \displaystyle\bigcup_{s = 1}^{2^{4t}} G^{r+4t}_s$ for $1 \leq t$. Then we have
\begin{eqnarray}
\nint_{I^r_1} M(1_{I^r_1} \gamma)(x)^2 d \lambda(x) &\gtrsim&
\displaystyle\sum_{i=0}^{\infty} \nint_{\mathcal G_i} M (1 _{I^r_1} \gamma)(x)^2 d \lambda(x) \notag \\
&=& \displaystyle\sum_{i=0}^{\infty} \displaystyle\sum_{s=1}^{2^{4i}} \nint_{G^{r+4i}_s} M ( 1_{I^r_1} \gamma)(x)^2 d \lambda(x) \label{c.max.est}. 
\end{eqnarray}
But, by inspection
\begin{eqnarray*}
M ( 1_{I^r_1} \gamma)(\zeta^{r+4t}_s) &\geq& \left( \frac{3}{2} \right)^{r + 4t} 
\end{eqnarray*}
for $t \in \nat $ and $1 \leq s \leq 2^{4t}$. Now, continuing from the above, we obtain
\begin{eqnarray*}
(\ref{c.max.est}) &\geq& \displaystyle\sum_{i=0}^{\infty} \displaystyle\sum_{s=0}^{2^{4i}} \nint_{G^{r+4i}_s} \left( \frac{3}{2} \right)^{2r + 8i} d \lambda(x) \notag \\
&\geq& \displaystyle\sum_{i=0}^{\infty} \displaystyle\sum_{s=1}^{2^{4i}} p^{r+4i}_s \left( \frac{3}{2} \right)^{2r + 8i}  \notag \\
&=& \displaystyle\sum_{i=0}^{\infty} \displaystyle\sum_{s=1}^{2^{4i}} \left( \frac{2}{9} \right)^{r + 4i} \left( \frac{3}{2} \right)^{2r+8i} \notag \\
&=& \displaystyle\sum_{i=1}^{\infty} 2^{-r} \\
&=& \infty .
\end{eqnarray*}
Immediately, we have $\nint_{I^r_1} M (1_{I^r_1} \gamma)(x)^2 d \lambda(x)$ is unbounded, which completes the proof. 
\end{proof}

\begin{bibsection}
\begin{biblist}
   \bib{Christcubes}{article}{
     author = {Christ, Michael},
     title = {Weak Type (1, 1) Bounds for Rough Operators},
     journal = {The Annals of Mathematics},
     volume = {128},
     number = {1},
     series = {Second Series},
     date = {1988},
     pages = {pp. 19-42},
    }

\bib{MR0358205}{article}{
   author={Coifman, R. R.},
   author={Fefferman, C.},
   title={Weighted norm inequalities for maximal functions and singular
   integrals},
   journal={Studia Math.},
   volume={51},
   date={1974},
   pages={241--250},
   issn={0039-3223},
   review={\MR{0358205 (50 \#10670)}},
}

\bib{MR2351373}{article}{
   author={Cruz-Uribe, D.},
   author={Martell, J. M.},
   author={P{\'e}rez, C.},
   title={Sharp two-weight inequalities for singular integrals, with
   applications to the Hilbert transform and the Sarason conjecture},
   journal={Adv. Math.},
   volume={216},
   date={2007},
   number={2},
   pages={647--676},
   issn={0001-8708},
   review={\MR{2351373 (2008k:42029)}},
   doi={10.1016/j.aim.2007.05.022},
}

\bib{MR1761362}{article}{
   author={Cruz-Uribe, D.},
   author={P{\'e}rez, C.},
   title={Two weight extrapolation via the maximal operator},
   journal={J. Funct. Anal.},
   volume={174},
   date={2000},
   number={1},
   pages={1--17},
   issn={0022-1236},
   review={\MR{1761362 (2001g:42040)}},
   doi={10.1006/jfan.2000.3570},
}

\bib{MR0284802}{article}{
   author={Fefferman, C.},
   author={Stein, E. M.},
   title={Some maximal inequalities},
   journal={Amer. J. Math.},
   volume={93},
   date={1971},
   pages={107--115},
   issn={0002-9327},
   review={\MR{0284802 (44 \#2026)}},
}

\bib{MR0312139}{article}{
   author={Hunt, Richard},
   author={Muckenhoupt, Benjamin},
   author={Wheeden, Richard},
   title={Weighted norm inequalities for the conjugate function and Hilbert
   transform},
   journal={Trans. Amer. Math. Soc.},
   volume={176},
   date={1973},
   pages={227--251},
   issn={0002-9947},
   review={\MR{0312139 (47 \#701)}},
}

\bib{leom}{article}{
author={Lerner, A.K},
author={Ombrosi, S.},
title={An extrapolation theorem with applications to weighted estimates for singular integrals},
date={2010},
eprint={http://u.math.biu.ac.il/~lernera/publications.html}
}

\bib{0906.1941}{article}{
  author={Lacey, M. T.},
  author={Petermichl, S.},
  author={Reguera, M.C.},
  title={Sharp $A_2$ Inequality for Haar Shift Operators},
  journal={Math. Ann.},
  volume={348},
  date={2010},
  number={1},
  pages={127-141},
  eprint={http://arxiv.org/abs/0906.1941},
 
}

\bib{1108.2319}{article}{
  author={Lacey, Michael T.},
    author={Sawyer, Eric T.},
    author={Shen, Chun-Yen}
      author={Uriarte-Tuero, Ignacio},
      title={Two Weight Inequalities for Hilbert Transform, Coronas and Energy Conditions},
      date={2011},
      eprint={http://www.arxiv.org/abs/1108.2319},
      }

\bib{0911.3437}{article}{
  author={Lacey, Michael T.},
    author={Sawyer, Eric T.},
      author={Uriarte-Tuero, Ignacio},
      title={Two Weight Inequalities for Discrete Positive Operators},
      date={2009},
      journal={Submitted},
      eprint={http://www.arxiv.org/abs/0911.3437},
      }

 \bib{0807.0246}{article}{
   author={Lacey, Michael T.},
     author={Sawyer, Eric T.},
       author={Uriarte-Tuero, Ignacio},
       title={A characterization of two weight norm inequalities for maximal singular integrals with one 
       doubling measure},
       date={2008},
       journal={ A\&PDE, to appear},
       eprint={http://arxiv.org/abs/0805.0246},
       }  
       
\bib{0911.3920}{article}{
  author={Lacey, Michael T.},
    author={Sawyer, Eric T.},
      author={Uriarte-Tuero, Ignacio},
      title={Two Weight Inequalities for Maximal Truncations of Dyadic
  Calder\'on-Zygmund Operators}, 
      date={2009},
      journal={Submitted},
      eprint={http://www.arxiv.org/abs/0911.3920},
      }          

\bib{1001.4043}{article}{
  author={Lacey, Michael T.},
    author={Sawyer, Eric T.},
      author={Uriarte-Tuero, Ignacio},
  title={A Two Weight Inequality for the Hilbert transform Assuming an Energy Hypothesis},
  eprint={http://www.arXiv.org/abs/1001.4043},
  date={2010},
 }

		\bib{MR2427454}{article}{
   author={Lerner, Andrei K.},
   author={Ombrosi, Sheldy},
   author={P{\'e}rez, Carlos},
   title={Sharp $A_1$ bounds for Calder\'on-Zygmund operators and the
   relationship with a problem of Muckenhoupt and Wheeden},
   journal={Int. Math. Res. Not. IMRN},
   date={2008},
   number={6},
   pages={Art. ID rnm161, 11},
}

\bib{NRVV}{article}{
author={Nazarov, F.},
author={Reznikov, A.},
author={Vasuynin, V.},
author={Volberg, A.},
title={$A_1$ Conjecture: weak norm estimates of weighted Singular Operators and Bellman functions.},	
date={2010},
eprint={http://sashavolberg.files.wordpress.com/2010/11/a11_7loghilb11_21_2010.pdf},
}

\bib{1003.1596}{article}{
author={Nazarov, F.},
author={Treil, S.},
author={Volberg, A.},
title={Two weight estimate for the Hilbert transform and corona decomposition for non-doubling measures
},
date={2005},
eprint={http://arxiv.org/abs/1003.1596}
}

\bib{MR1260114}{article}{
   author={P{\'e}rez, C.},
   title={Weighted norm inequalities for singular integral operators},
   journal={J. London Math. Soc. (2)},
   volume={49},
   date={1994},
   number={2},
   pages={296--308},
   issn={0024-6107},
   review={\MR{1260114 (94m:42037)}},
}

\bib{1008.3943}{article}{
author={Reguera, M.C.},
title={On Muckenhoupt-Wheeden Conjecture},	
journal = {Advances in Mathematics},
volume = {227},
number = {4},
pages = {1436--1450},
year = {2011},
issn = {0001-8708},
url = {http://www.sciencedirect.com/science/article/pii/S0001870811000946},
}

\bib{1011.1767}{article}{
   author = {Reguera, M.C.},
   author = {Thiele, C.},
    title = {The Hilbert transform does not map $L^{1}(Mw)$ to $L^{1,\infty}(w)$},
    date={2010},
   eprint = {http://arxiv.org/abs/1011.1767},
}

\bib{MR676801}{article}{
   author={Sawyer, Eric T.},
   title={A characterization of a two-weight norm inequality for maximal
   operators},
   journal={Studia Math.},
   volume={75},
   date={1982},
   number={1},
   pages={1--11},
   issn={0039-3223},
   review={\MR{676801 (84i:42032)}},
}

\bib{MR1232192}{book}{
   author={Stein, Elias M.},
   title={Harmonic analysis: real-variable methods, orthogonality, and
   oscillatory integrals},
   series={Princeton Mathematical Series},
   volume={43},
   note={With the assistance of Timothy S. Murphy;
   Monographs in Harmonic Analysis, III},
   publisher={Princeton University Press},
   place={Princeton, NJ},
   date={1993},
   pages={xiv+695},
   isbn={0-691-03216-5 },
   review={\MR{1232192 (95c:42002)}},
}

\end{biblist}
\end{bibsection}

\end{document}